\theoremstyle{plain}
\newtheorem{thm}{Theorem}[section]
\newtheorem{lem}[thm]{Lemma}
\theoremstyle{definition}
\newtheorem{define}[thm]{Definition}
\theoremstyle{remark}
\newtheorem*{rem}{Remark}
\def \t {\mathbf t}
\def \x {\mathbf x}
\def \y {\mathbf y}
\def \o {\mathbf o}
\def \c {\mathbf c}
\begin{document}

\title{Distance Geometry for Kissing Spheres}
\author{Hao Chen}
\address{Freie Universit\"at Berlin, Institut f\"ur Mathematik}
\keywords{Distance Geometry, Distance matrix, Cayley-Menger matrix, Graph theory, Matrix completion, Distance completion}
\subjclass[2010]{Primary 51K05; Secondary 51B20, 15A83, 52C17}
\begin{abstract}
  A kissing sphere is a sphere that is tangent to a fixed reference ball.
  We develop in this paper a distance geometry for kissing spheres,
  which turns out to be a generalization of the classical Euclidean distance geometry.
\end{abstract}
\thanks{
This research was supported by the Deutsche Forschungsgemeinschaft 
within the research training group `Methods for Discrete Structures' (GRK 1408).
}
\maketitle

\section{Introduction}\label{sec:Intro}
Distance geometry studies the geometry based only on knowledge of distances.

We develop in this paper a distance geometry for kissing spheres following the approach of Euclidean distance geometry.
We first establish a distance space by defining a distance function (Section \ref{sec:Dist}) on the set of kissing spheres.
Then we study two basic problems of distance geometry: 
the embeddability problem (Section \ref{sec:Embed}) and the distance completion problem (Section \ref{sec:Compl}). 

The key observation of this paper is that
the distance matrix for kissing spheres also plays the role of Cayley-Menger matrix.
It is then possible to adapt the proof techniques from Euclidean distance geometry for our use.
Our main results (Theorem \ref{EmbK} and \ref{CompK}) 
are similar to the results in Euclidean distance geometry (Theorem \ref{EmbE} and \ref{CompE}).
We also notice that the distance geometry for kissing spheres may degenerate to Euclidean distance geometry in different ways.
In this sense, the distance geometry developped in this paper generalizes Euclidean distance geometry.

At the end of the paper,
we will introduce some previous works on spheres,
and point out the similarity and relations to our results.
\subsection{Euclidean distance geometry}\label{sec:Euc}
Let $X$ be a set. 
A non-negative symmetric function $d:X\times X\to\mathbb{R}_{\geq 0}$ is called a \emph{distance function} on $X$. 
The pair $(X,d)$ is called a \emph{distance space}.
A first example would be the Euclidean distance space $(\mathbb{E}^n,d_E)$.

Euclidean distance geometry studies the geometry of points only with knowledge of Euclidean distances.
It answers questions like this: 
Is it possible to find three points $A$, $B$, $C$ in a plane 
such that the distances between them are $d_E(AB)=3$, $d_E(BC)=4$ and $d_E(CA)=5$?

In the language of distance geometry, this is an embeddability problem:
\begin{define}[Isometric Embedding]
  Let $(I,d)$ and $(I',d')$ be two distance spaces.
  We say that $(I,d)$ is \emph{isometrically embeddable} into $(I',d')$,
  if there exists an \emph{isometric embedding} $\sigma:I\to I'$, 
  such that $d'(\sigma(x),\sigma(y))=d(x,y)$ for all $x,y\in I$.
\end{define}
The set $I$ is often finite in distance geometry. 
In this case, we label the elements of $I$ by integers $0,\ldots,k$ where $|I|=k+1$,
and write $\sigma_i$ instead of $\sigma(i)$.
So the \emph{embeddability problem} asks: 
Given a finite distance space $(I,d)$, is it isometrically embeddable into the Euclidean distance space $(\mathbb{E}^n,d_E)$? 

There are two powerful tools for solving this problem.
One is the \emph{distance matrix} $D_I$, 
defined as a $(k+1)\times(k+1)$ matrix whose $i,j$ entry is the squared distance $d(i,j)^2$ for $i,j\in I$.
The other is the \emph{Cayley-Menger matrix} $M_I$, 
defined as 
$M_I=\bigl(\begin{smallmatrix} 
  D_I&e\\e^T&0
\end{smallmatrix}\bigr)$, 
where $e$ denotes the all-ones vector.

The following theorem combines some important results mentioned in \cites{menger1954, hayden1988, graham1985, gower1985}.
\begin{thm}\label{EmbE}
  Let $(I,d)$ be a distance space.
  Consider the following statements
  \begin{enumerate}[\rm 1)]
      \renewcommand{\theenumi}{\rm (\roman{enumi})}
      \renewcommand{\labelenumi}{\theenumi}
    \item \label{euc1} $(I,d)$ is isometrically embeddable into $(\mathbb{E}^n,d_E)$.
    \item \label{euc2} $(-1)^{|J|}\det M_J\geq 0$ for all $J\subseteq I$, and the rank of $M_I$ is at most $n+2$.
    \item \label{euc3} $M_I$ has exactly one positive eigenvalue, and at most $n+1$ negative eigenvalues.
    \item \label{euc4} $D_I$ has exactly one positive eigenvalue, and at most $n+1$ negative eigenvalues.
  \end{enumerate}
  Then, $\ref{euc1}\Leftrightarrow\ref{euc2}\Leftrightarrow\ref{euc3}\Rightarrow\ref{euc4}$
\end{thm}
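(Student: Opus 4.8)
The plan is to route all four conditions through a single $k\times k$ auxiliary matrix, the \emph{based Gram matrix} $B$ with entries $B_{ij}=\frac12\bigl(d(0,i)^2+d(0,j)^2-d(i,j)^2\bigr)$ for $i,j\in\{1,\dots,k\}$, and to exploit one pure linear-algebra congruence. Writing $H=\bigl(\begin{smallmatrix}0&1\\1&0\end{smallmatrix}\bigr)$ for the hyperbolic plane, I claim that $M_I$ is congruent to $H\oplus(-2B)$. Granting this, Sylvester's law of inertia yields at once that $M_I$ has exactly one positive eigenvalue iff $-2B$ has none, i.e. iff $B\succeq0$; that the number of negative eigenvalues of $M_I$ equals $1+\operatorname{rank}B$ when $B\succeq0$; and that $\operatorname{rank}M_I=\operatorname{rank}B+2$. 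These three facts are the engine for the whole theorem.

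First I would prove the congruence lemma by explicit elementary operations. Subtracting the row and column indexed by $0$ from those indexed by $1,\dots,k$ turns the border entries in those rows into $1-1=0$ and turns the $(i,j)$ entry into $d(i,j)^2-d(0,i)^2-d(0,j)^2=-2B_{ij}$; the surviving border row and column, now supported only at index $0$, are then used to clear the residual couplings $d(0,i)^2$ between index $0$ and the indices $1,\dots,k$. What remains is the $2\times2$ block on indices $0$ and the border, namely $\bigl(\begin{smallmatrix}0&1\\1&0\end{smallmatrix}\bigr)=H$, orthogonal to the block $-2B$. I regard verifying this reduction and keeping the signs straight as the main technical obstacle; everything else is bookkeeping built on top of it.

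Next, $(\ref{euc1})\Leftrightarrow[\,B\succeq0\text{ and }\operatorname{rank}B\le n\,]$ is Schoenberg's criterion: from a realization $\sigma_i=p_i$ translated so that $p_0=0$ one checks $B_{ij}=\langle p_i,p_j\rangle$, so $B$ is a Gram matrix, hence positive semidefinite of rank equal to the affine dimension, which is $\le n$; conversely any $B\succeq0$ of rank $r\le n$ factors as $B=Q^TQ$ with $Q\in\mathbb{R}^{r\times k}$, and the columns of $Q$ together with the origin realize the distances in $\mathbb{E}^r\subseteq\mathbb{E}^n$. Combining this with the congruence lemma gives $(\ref{euc1})\Leftrightarrow(\ref{euc3})$ immediately. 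For $(\ref{euc1})\Leftrightarrow(\ref{euc2})$ I would use that a symmetric matrix is positive semidefinite iff all of its principal minors are non-negative: applying the congruence lemma to each subset $J$ (based at some $j_0\in J$) yields $\det M_J=(-1)^{|J|}2^{|J|-1}\det B^{(J)}$, where $B^{(J)}$ is the corresponding based Gram matrix of the points of $J$. Hence the sign conditions $(-1)^{|J|}\det M_J\ge0$ for all $J$ are precisely the assertion that all principal minors of $B$ are non-negative, i.e. $B\succeq0$, while $\operatorname{rank}M_I\le n+2$ translates to $\operatorname{rank}B\le n$. In the forward direction one restricts the embedding to $J$ to see each $B^{(J)}\succeq0$, so the sign conditions hold for every $J$.

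Finally, $(\ref{euc3})\Rightarrow(\ref{euc4})$ follows from Cauchy interlacing: $D_I$ is the principal submatrix of $M_I$ obtained by deleting the bordering row and column, so the eigenvalues of $D_I$ interlace those of $M_I$; thus $D_I$ has at most one positive and at most $n+1$ negative eigenvalues. Since $\operatorname{tr}D_I=0$ and, excluding the trivial all-zero case, $D_I\neq0$, it must have at least one positive eigenvalue, hence exactly one. I expect the only delicate points here to be the degenerate configurations, where the point set is affinely dependent and several minors vanish while $B$ drops rank; these are already absorbed by allowing equality in $(\ref{euc2})$ and by phrasing $(\ref{euc3})$ and $(\ref{euc4})$ as inequalities on the number of negative eigenvalues.
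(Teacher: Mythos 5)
Your proposal is correct, and it is worth noting that the paper itself does not prove Theorem \ref{EmbE} at all --- it quotes the result from the literature (Menger, Gower, Hayden et al.). So the right comparison is with the paper's proof of its kissing-sphere analogue, Theorem \ref{EmbK}, and there your argument turns out to be the same technique in Euclidean dress: the paper performs block Gaussian elimination (a Schur complement) on the $2\times 2$ block $D_{\{0,k\}}$ of the kissing-sphere distance matrix and reads everything off the relations $\mathrm{In}\,D_I=(1,1,0)+\mathrm{In}\,P_I$, $\mathrm{rk}\,D_I=\mathrm{rk}\,P_I+2$ and a determinant identity; your congruence $M_I\cong H\oplus(-2B)$ is precisely the same manoeuvre, with the border row and the base-point row supplying the hyperbolic block $H$ in place of $D_{\{0,k\}}$, the based Gram matrix $-2B$ in place of the Schur complement $P_I$, and Schoenberg's factorization $B=Q^TQ$ in place of the paper's explicit reconstruction of kissing spheres. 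Your congruence computation, the sign bookkeeping $\det M_J=(-1)^{|J|}2^{|J|-1}\det B^{(J)}$, and the interlacing argument for (iii)$\Rightarrow$(iv) are all sound. Two small points deserve emphasis. First, in deducing $B\succeq 0$ from (ii), only the subsets $J$ containing the base point $0$ correspond to principal minors of $B$; that is all you need, since such $J$ already range over all principal submatrices of $B$, but the phrase ``precisely the assertion that all principal minors of $B$ are non-negative'' overstates the correspondence, because subsets avoiding $0$ give minors of differently based Gram matrices rather than of $B$. Second, your exclusion of the all-zero case in (iii)$\Rightarrow$(iv) is not a mere convenience: if all distances vanish, then $M_I=\bigl(\begin{smallmatrix}0&e\\e^T&0\end{smallmatrix}\bigr)$ has inertia $(1,1,|I|-1)$, so (iii) holds, while $D_I=0$ has no positive eigenvalue and (iv) fails; the implication as stated silently assumes this degenerate configuration away, and your trace argument correctly pinpoints it as the only obstruction.
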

The Euclidean distance matrix also provides information on the cosphericity of the points,
we refer to \cite{gower1985} for more details.

If we are not given a complete information about the distances, the right question to ask is the \emph{distance completion problem}: 
can we complete the unknown distances so that the completed distance space is embeddable into $(\mathbb{E}^n,d_E)$?
This problem can be formulated as follows in the language of graph theory:

\begin{define}[Distance completion]
  Given an undirected graph $G=(V,E)$ and a length function $\ell:E\to \mathbb{R}_{\geq 0}$,
  we say that $(G,\ell)$ is \emph{completable} in a distance space $(I,d)$ 
  if there is a pre-metric $d_V$ on the vertex set $V$,
  such that $(V,d_V)$ is isometrically embeddable into $(I,d)$, and $d(u,v)=\ell(u,v)$ for all $(u,v)\in E$.
\end{define}

So the Euclidean distance completion problem asks:
Given a graph $G$ and a length function $\ell$ on $G$, is $(G,\ell)$ completable in $(\mathbb{E}^n,d_E)$?

We define two sets of length functions as follows
\begin{align*}
  \mathcal{C}_E^n(G)&=\{\ell:E\to\mathbb{R}_{\geq 0}\mid(G,\ell)\text{ is completable in } (\mathbb{E}^n,d_E)\}\\
  \mathcal{K}_E^n(G)&=\{\ell:E\to\mathbb{R}_{\geq 0}\mid\text{for all cliques }K\in G, (K,\ell)\text{ is completable in } (\mathbb{E}^n,d_E)\}
\end{align*}
In general they are not equal, but we have the following theorem
\begin{thm}[\cite{laurent1998}]\label{CompE}
  $\mathcal{C}_E^n(G)=\mathcal{K}_E^n(G)$ if and only if $G$ is chordal.
\end{thm}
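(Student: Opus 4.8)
The inclusion $\mathcal{C}_E^n(G)\subseteq\mathcal{K}_E^n(G)$ holds for \emph{every} graph $G$: if $\sigma$ isometrically embeds some completion of $(G,\ell)$ into $(\mathbb{E}^n,d_E)$, then restricting $\sigma$ to the vertices of any clique $K$ witnesses that $(K,\ell)$ is completable. So the theorem reduces to showing that the reverse inclusion $\mathcal{K}_E^n(G)\subseteq\mathcal{C}_E^n(G)$ characterizes chordality, and I would treat the two implications separately.

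For the sufficiency (chordal $\Rightarrow$ equality) I would induct on $|V|$ using a perfect elimination ordering, whose existence is equivalent to chordality. Pick a simplicial vertex $v$, so that $N(v)$ is a clique; then $G-v$ is again chordal and the restriction of $\ell$ lies in $\mathcal{K}_E^n(G-v)$, so by induction $(G-v,\ell)$ admits an embedding $\sigma'$ into $\mathbb{E}^n$. The crucial gluing step uses that $\{v\}\cup N(v)$ is itself a clique, hence embeddable because $\ell\in\mathcal{K}_E^n(G)$: this gives a second embedding in which $N(v)$ appears together with an admissible position for $v$. Now $\sigma'|_{N(v)}$ and the clique embedding restricted to $N(v)$ are two isometric copies of the same finite distance space in $\mathbb{E}^n$, hence congruent, and a congruence between subsets of $\mathbb{E}^n$ extends to an isometry $\rho$ of $\mathbb{E}^n$. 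Placing $v$ at $\rho$ applied to its position in the clique embedding, and keeping $\sigma'$ elsewhere, realizes every edge length of $G$ (edges inside $G-v$ via $\sigma'$; edges at $v$, which all lead into $N(v)$, by construction), so $\ell\in\mathcal{C}_E^n(G)$.

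For the necessity (non-chordal $\Rightarrow$ strict inclusion) I would produce a length function in $\mathcal{K}_E^n(G)\setminus\mathcal{C}_E^n(G)$. Non-chordality yields an induced chordless cycle $v_1,\dots,v_k$ with $k\geq 4$; I set $\ell(v_iv_{i+1})=1$ for $i<k$ and $\ell(v_kv_1)=k$, so that one edge exceeds the total length $k-1$ of the complementary cycle path. Any completion would be a metric on $V$ in which the triangle inequality forces $d(v_1,v_k)\leq\sum_{i<k}d(v_i,v_{i+1})=k-1$, contradicting $d(v_1,v_k)=k$; hence $(G,\ell)\notin\mathcal{C}_E^n(G)$ regardless of how $\ell$ is extended off the cycle. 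It remains only to extend $\ell$ so that every clique of $G$ is embeddable. Since the cycle is induced, any clique contains at most two cycle vertices and therefore at most one cycle edge, so I would assign a common large length $R$ to all non-cycle edges; each clique then looks like a regular simplex of edge $R$ with at most one edge replaced by a cycle length, which embeds in $\mathbb{E}^n$ once $R$ is large relative to the cycle lengths and $n\geq\omega(G)$. This gives $\ell\in\mathcal{K}_E^n(G)\setminus\mathcal{C}_E^n(G)$.

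The main obstacle is precisely this extension step in the necessity direction: one must keep the long cycle edge globally unrealizable while certifying embeddability of every clique, including those mixing a cycle edge with outside vertices. Verifying that the "large regular simplex" choice genuinely embeds in $\mathbb{E}^n$ for the prescribed $n$ — rather than only in arbitrarily high dimension, and with attention to the low-dimensional regime $n<\omega(G)$ — is where the argument needs the most care; the sufficiency direction, by contrast, is robust and rests solely on the uniqueness of Euclidean embeddings up to congruence.
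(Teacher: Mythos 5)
Your reduction to the inclusion $\mathcal{K}_E^n(G)\subseteq\mathcal{C}_E^n(G)$ and your sufficiency argument are correct. The induction on a simplicial vertex works: all pairs in $N(v)$ are edges of $G$, so the two embeddings of $N(v)$ realize the same pairwise distances, a distance-preserving bijection between finite subsets of $\mathbb{E}^n$ extends to an ambient isometry, and gluing with that isometry realizes every edge at $v$. Note that the paper does not prove Theorem \ref{CompE} itself (it cites Laurent), but it does prove the kissing-sphere analogue, Theorem \ref{CompK}, and your argument is precisely the Euclidean counterpart of that proof: a perfect elimination ordering is the same as building the chordal graph by clique-sums, and your congruence $\rho$ plays the role that the Lorentz transformation $\mathcal{L}$ plays in Lemma \ref{CliqueSum}.

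The necessity direction, however, has a genuine gap --- exactly the one you flag at the end but do not repair, and it is fatal rather than a matter of care. Assigning a common positive length $R$ to all non-cycle edges makes any clique on $m$ vertices disjoint from the cycle a regular simplex of edge $R$, which embeds in $\mathbb{E}^n$ only if $m\leq n+1$, \emph{no matter how large $R$ is}: scaling changes nothing about the required affine dimension. So whenever $\omega(G)>n+1$ --- entirely possible, since $G$ and $n$ are both given --- your $\ell$ is not in $\mathcal{K}_E^n(G)$ at all, and the proof produces no element of $\mathcal{K}_E^n(G)\setminus\mathcal{C}_E^n(G)$; the same dimension problem afflicts cliques meeting the cycle in one edge plus many outside vertices. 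The repair, which is the construction used in the paper's proof of Theorem \ref{CompK}, is to exploit that a completion only requires a pre-metric, so distinct vertices may be sent to the same point: pick one edge $e_0$ of the chordless cycle $C$ and set $\ell(e)=1$ if $e=e_0$ or if $e$ has exactly one endpoint in $C$, and $\ell(e)=0$ otherwise. Since a clique meets $C$ in at most one edge, every clique then collapses to at most three distinct points (at worst an equilateral triangle of side $1$), hence is completable in $\mathbb{E}^2\subseteq\mathbb{E}^n$ independently of its size; while on $C$ the zero lengths force all cycle vertices to be mapped to a single point, contradicting $\ell(e_0)=1$, so $\ell\notin\mathcal{C}_E^n(G)$. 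Your ``one long edge'' obstruction on the cycle is fine as an obstruction, but it must be combined with this collapsing off the cycle, not with a large simplex.
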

Here, a \emph{chordal graph} is a graph without chordless cycles longer than $3$,
or equivalently, every cycle longer than $3$ has an edge joining two vertices not adjacent in the cycle.

Therefore, for a choral graph, in order to tell if a length function is completable or not, we only need to check all the cliques.

\subsection{Kissing spheres}\label{sec:KSphere}
We now define kissing spheres, the main object of study of this paper.

We work in the extended $n$-dimensional Euclidean space $\hat{\mathbb{E}}^n=\mathbb{E}^n\cup\{\infty\}$, 
with Cartesian coordinate system $(x_0,\dots,x_{n-1})$.
A sphere centered at $\o\in\hat{\mathbb{E}}^n$ with diameter $\phi$ is the set
$\{\x\mid d_E(\x,\o)=\phi/2\}$.
We consider $(n-1)$-hyperplanes as spheres of infinite diameter.

Let $\kappa$ be a real number, a \emph{ball} of curvature $\kappa$ may be one of the followings:
\begin{inparaenum}[i)]
\item A set $\{\x\mid d_E(\x,\o)\leq 1/\kappa\}$ if $\kappa>0$;
\item A set $\{\x\mid d_E(\x,\o)\geq-1/\kappa\}$ if $\kappa<0$;
\item A closed half-spaces if $\kappa=0$.
\end{inparaenum}
In the first two cases, $\o\in\mathbb{E}^n$ is the center of the ball.

A sphere is said to be \emph{tangent} to a ball at a point $\t\in\mathbb{E}^n$,
if $\t$ is the only element in their intersection.
We call $\t$ the \emph{tangent point}, 
which can be at infinity if it involves a ball of $0$ curvature and a sphere of infinit diameter.

\begin{define}[Kissing sphere]\label{Def1}
  Fix a ball in $\hat{\mathbb{E}}^n$ as the reference ball,
  a \emph{kissing sphere} is a sphere tangent to the reference ball.
\end{define}

Our main concern is the combinatorics, 
i.e. relations like tangency, intersection, or disjointness between the kissing spheres.
These are defined as follows:
two kissing spheres are \emph{tangent} to each other if their intersection consists of a single point
which is not on the boundary of the reference ball;
two kissing spheres \emph{intersect} if their intersections consists of more than one point,
or if they are tangent to the reference ball at a same tangent point;
the \emph{disjointness} is defined as usual.

Without loss of generality, we can assume the reference ball to be the half-space $x_0\leq 0$.
If it is not the case, a M\"obius transformation can send the reference ball to the half-space without any change to the combinatorics.

Therefore we define a kissing sphere alternatively as follows
\begin{define}[Kissing sphere]\label{Def2}
  A kissing sphere in $\hat{\mathbb{E}}^n$ is a sphere in the half-space $x_0\geq 0$ tangent to the hyperplane $x_0=0$.
\end{define}
Note that the tangent point can be at infinity.
In this case, the kissing sphere is a hyperplane $x_0=h>0$, 
and we say that it is a hyperplane at distance $h$. 

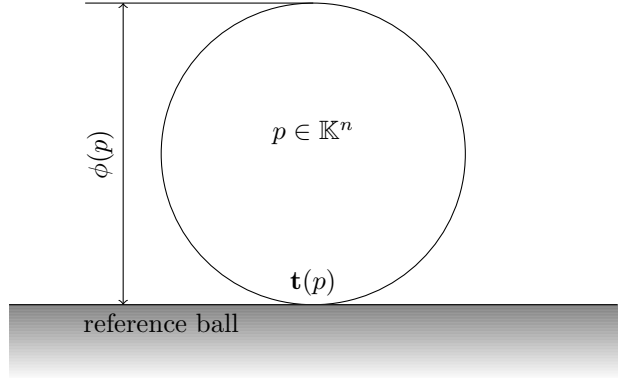
\begin{figure}[htb]
  \begin{center}
    \begin{tikzpicture}
      \draw (0,2) circle (2cm) 
      node[above] {$p\in\mathbb{K}^n$};
      \shade (-4,0) rectangle (4,-1);
      \draw (-4,0) -- (4,0)
      node[below,near start] {reference ball}
      node[above,midway] {$\t(p)$};
      \draw (-3,4) -- (0,4);
      \draw[<->] (-2.5,0) -- (-2.5,4)
      node[sloped,midway,above] {$\phi(p)$};
    \end{tikzpicture}
  \end{center}
  \label{DefKB}
  \caption{A kissing sphere as defined in Definition \ref{Def2}.}
\end{figure}

We denote by $\mathbb{K}^n$ the set of kissing spheres in $\hat{\mathbb{E}}^n$ as defined in Definition \ref{Def2}.
For a kissing sphere $p\in\mathbb{K}^n$, as shown in Figure \ref{DefKB} for $n=2$,  
we denote by $\t(p)\in\hat{\mathbb{E}}^{n-1}$ the tangent point on the $(n-1)$-dimensional hyperplane $x_0=0$, 
and by $\phi(p)\in\mathbb{R}\cup\{\infty\}$ the diameter of $p$.
The pair $(\phi(p),\t(p))$ is then the ``north pole'' of $p$, situated in the half-space $x_0>0$.

\section{Distance function}\label{sec:Dist}
\emph{M\"obius transformations} on $\hat{\mathbb{E}}^n$ are diffeomorphisms
\footnote{
As in \cite{hertrich-jeromin2003}, we don't require a M\"obius transformation to preserve the orientation,
therefore reflections and inversions are also M\"obius transformations.
}
that map spheres to spheres.
They form a group called the \emph{M\"obius group}, denoted by $\text{M\"ob}(n)$. 

A M\"obius transformation $T\in\text{M\"ob}(n)$ that preserves the half-space $x_0\leq 0$ maps kissing spheres to kissing spheres.
It also maps between spheres centered on the hyperplane $x_0=0$,
therefore the restriction of $T$ on the hyperplane $x_0=0$ is a M\"obius transformation on $\hat{\mathbb{E}}^{n-1}$.
Conversely, by \emph{Poincar\'e extension} \cite{beardon1983}*{Section 3.3},
each M\"obius transformation on $\hat{\mathbb{E}}^{n-1}$ 
can be naturally extended to a M\"obius transformation on $\hat{\mathbb{E}}^n$ that preserves the half-space $x_0\leq 0$.
We thus define the action of $T\in\text{M\"ob}(n-1)$ on $\mathbb{K}^n$ to be the action of its Poincar\'e extension.

We define a distance $d_K$ on $\mathbb{K}^n$ as follows:
\begin{define}[Distance for kissing spheres]
  Let $p,q$ be two elements of $\mathbb{K}^n$.
  Let $T\in\text{M\"ob}(n-1)$ be, if there exists, a M\"obius transformation preserving the half-space $x_0\leq 0$, such that
  $$
  \phi(Tp)=\phi(Tq)=1
  $$
  Then the distance between $p$ and $q$ is $d_K(p,q):=d_E(\t(Tp),\t(Tq))$.
  If such a $T$ does not exist, $d_K(p,q):=0$.
\end{define}

\begin{figure}[htb]
  \begin{center}
    \begin{tikzpicture}
      \draw (-6,0) -- (-2,0);
      \draw (-4.5,0) +(0,1.5) circle (1.5cm) node {$p$};
      \draw (-2.5,0) +(0,0.5) circle (0.5cm) node {$q$};
      \draw[thick,->] (-1,1) -- (1,1) node[midway, above] {$T$};
      \draw (1.5,0.5) -- (6,0.5);
      \draw (2.5,0.5) +(0,1) circle (1cm) node {$Tp$};
      \draw (5,0.5) +(0,1) circle (1cm) node {$Tq$};
      \draw[|<->|] (2.5,0.3) -- (5,0.3) node[midway, below] {$d_K(p,q)$};
    \end{tikzpicture}
  \end{center}
  \caption{Definition of the distance, if $T$ exists.}
\end{figure}
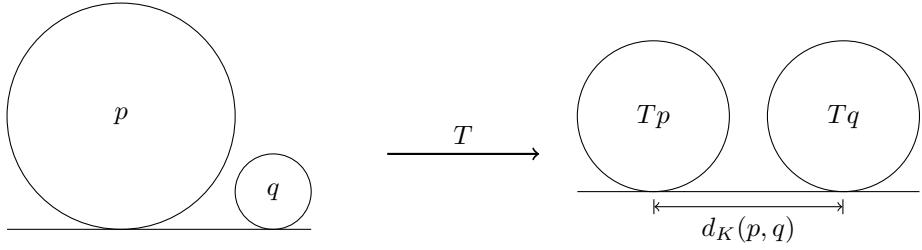
\begin{thm}\label{distance}
  $d_K$ is well defined, independent of the choice of $T$. 
\end{thm}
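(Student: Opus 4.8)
The plan is to exhibit an explicit M\"obius invariant that collapses to the squared distance after the normalization in the definition. For two kissing spheres $p,q$ of finite diameter, consider the quantity
\[
  \iota(p,q) := \frac{d_E(\t(p),\t(q))^2}{\phi(p)\,\phi(q)}.
\]
If I can show that $\iota$ is invariant under the action of $\text{M\"ob}(n-1)$ on $\mathbb{K}^n$, then well-definedness follows at once. Given two transformations $T,T'\in\text{M\"ob}(n-1)$ both achieving $\phi(Tp)=\phi(Tq)=\phi(T'p)=\phi(T'q)=1$, the composite $S=T'\circ T^{-1}$ (again in $\text{M\"ob}(n-1)$, since Poincar\'e extension is a homomorphism) carries the finite-diameter spheres $Tp,Tq$ to $T'p,T'q$, so invariance of $\iota$ gives
\[
  d_E(\t(Tp),\t(Tq))^2 = \iota(Tp,Tq) = \iota(T'p,T'q) = d_E(\t(T'p),\t(T'q))^2 ,
\]
all four denominators being $1$. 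This is exactly the asserted independence of the choice of $T$.

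To prove the invariance I would use that $\text{M\"ob}(n-1)$ is generated by the similarities of $\hat{\mathbb{E}}^{n-1}$ (translations, orthogonal maps, and dilations) together with a single inversion; since the transformations preserving $\iota$ form a subgroup, it suffices to check one representative of each type acting through its Poincar\'e extension. First I record the action on the pair $(\phi,\t)$ by writing a kissing sphere of tangent point $\t$ and diameter $\phi$ as the zero set of $|\x|^2 - \phi x_0 - 2\,\x'\cdot\t + |\t|^2$, where $\x=(x_0,\x')$. From this one reads off immediately that translations and orthogonal maps act on $\t$ as Euclidean isometries and fix $\phi$, while a dilation by $\lambda>0$ sends $(\phi,\t)\mapsto(\lambda\phi,\lambda\t)$. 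In each of these three cases the numerator and the denominator of $\iota$ pick up the same factor, so $\iota$ is unchanged.

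The one genuine computation is the inversion, say $\x\mapsto\x/|\x|^2$ centred at the origin (which lies on $x_0=0$ and hence preserves the half-space). Substituting $\x=\y/|\y|^2$ into the defining polynomial above and clearing denominators, the image sphere has defining polynomial $|\y|^2 - (\phi/|\t|^2)y_0 - 2\,\y'\cdot(\t/|\t|^2) + 1/|\t|^2$, so comparing coefficients shows the inversion acts by $(\phi,\t)\mapsto(\phi/|\t|^2,\ \t/|\t|^2)$. Feeding this together with the classical identity $\bigl|\t(p)/|\t(p)|^2 - \t(q)/|\t(q)|^2\bigr| = |\t(p)-\t(q)|/(|\t(p)|\,|\t(q)|)$ into $\iota$, both numerator and denominator acquire the common factor $1/(|\t(p)|^2|\t(q)|^2)$, and $\iota$ is preserved. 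This completes the invariance argument, and as a by-product yields the closed form $d_K(p,q)^2 = d_E(\t(p),\t(q))^2/(\phi(p)\phi(q))$ for spheres of finite diameter.

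The main obstacle is precisely this inversion step, partly the coefficient bookkeeping and partly the degenerate configurations it hides: when a tangent point coincides with the centre of inversion, or when $p$ or $q$ is a hyperplane at infinity (infinite diameter), the raw expression for $\iota$ must be interpreted projectively. These are exactly the situations in which a normalizing $T$ is forced to move a point to or from $\infty$, and I would dispatch them either by continuity or by re-deriving the transformation rule for $(\phi,\t)$ straight from the defining equation, which persists as an algebraic identity. Finally, the case in which no normalizing $T$ exists requires no argument, as $d_K$ is set to $0$ there by definition.
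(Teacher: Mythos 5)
Your proposal is correct and is essentially the paper's own argument: both proofs rest on showing that the quantity $d_E(\t(p),\t(q))^2/(\phi(p)\phi(q))$ is invariant under the (Poincar\'e extensions of) a generating set of $\text{M\"ob}(n-1)$, with the inversion as the only substantive case, and your identity $\bigl|\t(p)/|\t(p)|^2-\t(q)/|\t(q)|^2\bigr|=|\t(p)-\t(q)|/(|\t(p)|\,|\t(q)|)$ is exactly the content of the paper's similar-triangles computation. The remaining differences are cosmetic --- you generate by similarities plus a single inversion and read the action on $(\phi,\t)$ off the defining polynomial, while the paper generates by reflections plus inversions centered on $x_0=0$, computes geometrically, and additionally constructs an explicit normalizing inversion to obtain the closed formula for $d_K$ (which your argument recovers only as a by-product); your deferral of the hyperplane and coincident-tangent-point cases, though only sketched, is handled by the same extension the paper carries out.
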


That is, $d_K$ defined on $\mathbb{K}^n$ is invariant under the action of $\text{M\"ob}(n-1)$.

As a warm-up before the proof, we shall look at the effect of an inversion preserving the half-space $x_0\leq 0$ on a kissing sphere.
Let $s$ be a sphere centered at a point $\o$ on the hyperplane $x_0=0$ with radius $r$.
For a kissing sphere $p\in\mathbb{K}^n$, we denote by $p^s$ the image of $p$ under the inversion transform with respect to $s$.
We have
\begin{align}
  \phi(p^s)&=\frac{r^2\phi(p)}{d_E(\o,\t(p))^2}\label{Invphi}\\
  d_E(\o,\t(p^s))&=\frac{r^2}{d_E(\o,\t(p))}\label{Invt}
\end{align}
where $d_E(\x,\y)$ is the Euclidean distance on the hyperplane $x_0=0$.

The effect of such an inversion on $p$ is then the same as the effect of a dilation of scale factor $r^2/d_E(\o,\t(p))^2$.
The scale factor does not depend on the diameter of $p$.
This will be a useful fact.

\begin{proof}[Proof of Theorem \ref{distance}]
  An explicit calculation is not necessary, but may help understanding the situation.

  Let $p,q$ be two kissing spheres such that $\phi(p),\phi(q)<\infty$ and $d_E(\t(p),\t(q))>0$. 
  The infinite case and the degenerate case will be discussed later.

  Choose a point $\o$ on the line segment $\t(p)\t(q)$, such that $d_E(\o,\t(p))/d_E(\o,\t(q))=\sqrt{\phi(p)/\phi(q)}$, i.e. 
  \begin{align*}
    d_E(\o,\t(p))&=\frac{d_E(\t(p),\t(q))\sqrt{\phi(p)}}{\sqrt{\phi(p)}+\sqrt{\phi(q)}}\\
    d_E(\o,\t(q))&=\frac{d_E(\t(p),\t(q))\sqrt{\phi(q)}}{\sqrt{\phi(p)}+\sqrt{\phi(q)}}
  \end{align*}
  Let $s$ be a sphere centered at $\o$ with radius 
  $$
  r=\frac{d_E(\t(p),\t(q))}{\sqrt{\phi(p)}+\sqrt{\phi(q)}}
  $$
  Then, by \eqref{Invphi} and \ref{Invt}, we have $\phi(p^s)=\phi(q^s)=1$, and 
  \begin{equation}\label{normal}
    \begin{split}
      d_K(p,q)&=d_E(\t(p^s),\t(q^s))=d_E(\o,\t(p^s))+d_E(\o,\t(q^s))\\
      &=\frac{d_E(\t(p),\t(q))}{\sqrt{\phi(p)\phi(q)}}
    \end{split}
  \end{equation}

  A M\"obius transformation is generated by reflections and inversions.
  For details, see \cite{beardon1983}*{Definition 3.1.1} where inversions are thought as reflections with respect to a sphere, 
  or \cite{cecil2008}*{Theorem 3.8} where reflections are thougth as inversions with respect to a plane.
  Theorem \ref{distance} is obviously true for reflections. We shall study the inversions in detail.

  An inversion preserving the half-space $x_0\leq 0$ must have its inversion sphere centered at a point $\o$ on the hyperplane $x_0=0$. 
  Let $s$ be such an inversion sphere of radius $r$. 
  By \eqref{Invt}, we have $d_E(\o,\t(p^s))=r^2/d_E(\o,\t(p))$ and $d_E(\o,\t(q^s))=r^2/d_E(\o,\t(q))$. 
  Thanks to the independence of the scale factor of the diameter, the triangle $\o\t(p)\t(q)$ and the triangle $\o\t(q^s)\t(p^s)$ are similar, and 
  $$
  d_E(\t(p^s),\t(q^s))=\frac{r^2d_E(\t(p),\t(q))}{d_E(\o,\t(p))d_E(\o,\t(q))}
  $$
  We then have
  $$
  d_K(p^s,q^s)=\frac{d_E(\t(p^s),\t(q^s))}{\sqrt{\phi(p^s)\phi(q^s)}}=\frac{d_E(\t(p),\t(q))}{\sqrt{\phi(p)\phi(q)}}=d_K(p,q)
  $$
  Which proves the theorem.

  We now extend the calculation to the infinite case. 
  Let $p$ be a hyperplane at distance $h$. 
  Consider again a sphere $s$ centered at a point $\o$ on $x_0=0$ with radius $r$. 
  Then $\phi(q^s)=r^2\phi(q)/d_E(\o,\t(q))^2$, $\phi(p^s)=r^2/h$, and $d_E(\t(p^s),\t(q^s))=r^2/d_E(\o,\t(q))$.
  Since the inversion preserves the distance $d_K$, by \eqref{normal}, we have
  \begin{equation}\label{infinite}
    d_K(p,q)=\frac{d_E(\t(p^s),\t(q^s))}{\sqrt{\phi(p^s)\phi(q^s)}}=\sqrt{\frac{h}{\phi(q)}}
  \end{equation}

  Finally we study the degenerate case, i.e. $\t(p)=\t(q)$ but $p\neq q$.

  Since a M\"obius transformation is bijective, it is impossible to transform $p$ and $q$ into spheres of same diameter.
  According to the definition, $d_K(p,q)=0$.
  This is reasonable, since it's the limit of \eqref{normal} as $d_E(\t(p),\t(q))$ tends to $0$,
  or the limit of \eqref{infinite} as $\phi(q)$ tends to infinity.
  This is M\"obius invariant since $\t(p)=\t(q)$ holds under any M\"obius transformation.
\end{proof}

\begin{rem}
  $d_K$ is not a metric, but a \emph{pre-metric}, 
  i.e. for all $p,q\in\mathbb{K}^n$, we have
  \begin{inparaenum}[i)]
  \item $d_K(p,p)=0$;
  \item $d_K(p,q)\geq 0$ (non-negativity); and
  \item $d_K(p,q)=d_K(q,p)$ (symmetry).
  \end{inparaenum}
  However, the triangle inequality may not be satisfied, 
  and there may be $p,q\in\mathbb{K}^n$ such that $p\neq q$ but $d_K(p,q)=0$.
\end{rem}

We notice that $d_K$ reflects the combinatorics.
More specifically, $d_K=1$ if two kissing spheres are tangent to each other, 
$>1$ if they are disjoint, $<1$ if they intersect, 
and $=0$ if they are tangent to the reference ball at a same point. 

We notice from \eqref{normal} that 
the distance space for a set of kissing spheres is discretely conformally Euclidean:
\begin{define}[discrete conformal equivalence \cites{luo2004,bobenko2010}]

  Two discrete distance spaces $(I,d)$ and $(I',d')$ are \emph{conformally equivalent},
  if there exists a mapping $\xi:I\to I'$ and a real valued function $f:I\to\mathbb{R}_{\geq 0}$, 
  such that $d'(\xi_i,\xi_j)=f(i)f(j)d(i,j)$ for all $i,j\in I$.
  We say that $\xi$ is a \emph{conformal mapping} with the \emph{conformal factor} $f$.

  $(I,d)$ is \emph{conformally Euclidean}, if it is conformally equivalent to Euclidean distance space.
\end{define}
In fact, if $(I,d)$ is embeddable into $(\mathbb{K}^n,d_K)$,
we can choose an isometric embedding $\sigma$ such that $\phi(\sigma_i)<\infty$ for all $i\in I$.
We recognise in \eqref{normal} that $\t\circ\sigma$ conformally maps $(I,d)$ to $(\mathbb{E}^{n-1},d_E)$, 
with conformal factor $\sqrt{\phi\circ\sigma}$. 
We also notice that, if $\phi(\sigma_i)=1$ for all $i\in I$, $d_K$ degenerates to Euclidean distance.

\section{Embeddability problem}\label{sec:Embed}
We now prove our first main theorem,
\begin{thm}\label{EmbK}
  Let $(I,d)$ be a finite distance space.
  The following statements are equivalent
  \begin{enumerate}
      \renewcommand{\theenumi}{\rm (\roman{enumi})}
      \renewcommand{\labelenumi}{\theenumi}
    \item\label{kiss1} $(I,d)$ is isometrically embeddable into $(\mathbb{K}^n,d_K)$.
    \item\label{kiss2} $(-1)^{|J|}\det D_J\leq 0$ for all $J\subseteq I$, and the rank of $D_I$ is at most $n+1$.
    \item\label{kiss3} $D_I$ has exactly one positive eigenvalue, and at most $n$ negative eigenvalues.
  \end{enumerate}
\end{thm}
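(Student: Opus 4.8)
The plan is to realise the kissing-distance matrix as the Gram matrix of null vectors for a fixed Lorentzian form, so that $D_I$ itself plays exactly the role the Cayley--Menger matrix plays in Theorem \ref{EmbE}. Concretely, to each kissing sphere $p$ of finite diameter I associate the vector $v(p)=(a,c,\mathbf w)\in\mathbb{R}^{n+1}$ with $a=\lVert\t(p)\rVert^2/\phi(p)$, $c=1/\phi(p)$ and $\mathbf w=\t(p)/\phi(p)$, and to a hyperplane at distance $h$ the vector $(h,0,\mathbf 0)$; in the same blocks $(a,c,\mathbf w)$ I set
$$
Q=\begin{pmatrix}0&1&0\\1&0&0\\0&0&-2I_{n-1}\end{pmatrix},
$$
a symmetric matrix of signature $(1,n)$. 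Expanding the squared Euclidean distance in \eqref{normal} (and using \eqref{infinite} in the infinite case) gives the identity $d_K(p,q)^2=v(p)^{\mathsf T}Q\,v(q)$ together with $v(p)^{\mathsf T}Q\,v(p)=0$; thus every kissing sphere is a $Q$-null vector, $d_K^2$ is the $Q$-inner product, and $p\mapsto v(p)$ identifies $\mathbb{K}^n$ with the nappe $a+c>0$ of the light cone of $Q$. This light-cone model is the structural heart of the proof.

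First I would prove \ref{kiss1}$\Rightarrow$\ref{kiss3}. Given an isometric embedding $\sigma$, I may assume all diameters finite, so that the columns $v(\sigma_i)$ assemble into an $(n+1)\times(k+1)$ matrix $V$ with $D_I=V^{\mathsf T}QV$. Since $D_I$ is a pullback of a form of signature $(1,n)$, Sylvester's law of inertia bounds its positive eigenvalues by $1$ and its negative eigenvalues by $n$. Because $D_I$ has zero diagonal it is traceless, and being nonzero it must carry both a positive and a negative eigenvalue; hence it has exactly one positive and at most $n$ negative eigenvalues.

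Next, \ref{kiss3}$\Leftrightarrow$\ref{kiss2} is the same linear-algebra dictionary between inertia and signed principal minors that underlies the Cayley--Menger conditions in Theorem \ref{EmbE}. For \ref{kiss3}$\Rightarrow$\ref{kiss2}, Cauchy interlacing gives every principal submatrix $D_J$ at most one positive eigenvalue; since $D_J$ is traceless it has at most $|J|-1$ negative eigenvalues, so either it is singular, whence the minor vanishes, or its inertia is $(1,0,|J|-1)$, and in both cases $(-1)^{|J|}\det D_J\le 0$. The rank bound in \ref{kiss2} is merely the restatement of ``one positive, at most $n$ negative''. The converse \ref{kiss2}$\Rightarrow$\ref{kiss3} follows by the standard induction on $|J|$, reading off the inertia from the chain of minor signs.

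Finally comes the constructive step \ref{kiss3}$\Rightarrow$\ref{kiss1}, which I expect to be the main obstacle. Using the inertia I factor $D_I=V^{\mathsf T}QV$ with the fixed $Q$ above and write each column $v_i=(a_i,c_i,\mathbf w_i)$; the zero diagonal makes every $v_i$ a $Q$-null vector, while non-negativity of the entries (which holds because $(I,d)$ is a distance space) reads $v_i^{\mathsf T}Qv_j\ge 0$. The delicate point is that the factorisation fixes the $v_i$ only up to the Lorentz group $O(1,n)$, and individual sign flips $v_i\mapsto -v_i$ are not available, so I cannot place each $v_i$ in the correct nappe by hand. The resolution is the Lorentzian fact that two null vectors in the same nappe have non-negative inner product whereas vectors in opposite nappes have non-positive inner product, with equality only for parallel vectors; combined with $v_i^{\mathsf T}Qv_j\ge 0$ this forces all $v_i$ (after one global application of $-I\in O(1,n)$) into a single nappe, the only exceptions being fully degenerate points at $d$-distance $0$ from everything, which I split off and treat through the degenerate case of $d_K$. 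Once all $v_i$ lie in the closed nappe $a_i+c_i\ge 0$, the null relation $a_ic_i=\lVert\mathbf w_i\rVert^2$ together with positivity of $a_i+c_i$ forces $c_i\ge 0$, so I may read off $\phi_i=1/c_i$ (or $\phi_i=\infty$ when $c_i=0$) and $\t_i=\mathbf w_i/c_i$, producing genuine kissing spheres whose $d_K$ reproduces $D_I$ by the identity of the first step. Making the nappe and degeneracy analysis fully rigorous, including the rank-deficient case in which $Q$ restricts to a lower Lorentzian signature on $\mathrm{span}\{v_i\}$, is the part that will require the most care.
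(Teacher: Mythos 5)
Your proposal is correct in substance but takes a genuinely different route from the paper's proof. The paper proves Theorem \ref{EmbK} by normalizing one point of the embedding to be the hyperplane at distance $1$, so that $D_I$ takes the block form \eqref{CMKiss}, and then running a Schur-complement argument on the $2\times2$ block $D_{\{0,k\}}$: embeddability is shown equivalent to negative semi-definiteness (of rank at most $n-1$) of the complement $P_I$, and the relations \eqref{det}, \eqref{In}, \eqref{rk} translate this into conditions \ref{kiss2} and \ref{kiss3}. You instead build the Lorentzian model first: your identity $d_K(p,q)^2=v(p)^{\mathsf T}Qv(q)$, with $v$ a bijection onto one nappe of the light cone of $Q$, is exactly the isometry $\Psi$ of Theorem \ref{lightcone} (equation \eqref{Embed0}, up to a linear change of coordinates), which the paper establishes only \emph{after} Theorem \ref{EmbK}, as a corollary, in Section \ref{sec:LCone}. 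Granting the model, your Sylvester bound for $V^{\mathsf T}QV$ plus tracelessness gives \ref{kiss1}$\Rightarrow$\ref{kiss3}, and your nappe dichotomy for null vectors gives \ref{kiss3}$\Rightarrow$\ref{kiss1}; the latter is essentially the ``all future-directed or all past-directed'' argument the paper itself uses in Section \ref{sec:LCone}. What your route buys is that Theorem \ref{lightcone} comes for free and condition \ref{kiss3} becomes transparent; what the paper's route buys is that condition \ref{kiss2} falls out directly from \eqref{det} and \eqref{rk}, whereas you must supply the inertia-from-principal-minors dictionary yourself. Your interlacing argument for \ref{kiss3}$\Rightarrow$\ref{kiss2} is fine, but the appeal to ``standard induction'' for \ref{kiss2}$\Rightarrow$\ref{kiss3} should be fleshed out, e.g.\ via the characteristic polynomial (whose coefficients are signed sums of principal minors) and Descartes' rule of signs, or via the Schur complement of a block $\bigl(\begin{smallmatrix}0&b\\b&0\end{smallmatrix}\bigr)$ with $b\neq0$, which is the paper's own mechanism.

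One caution about the step you flag as delicate. Your plan to ``split off'' indices whose vector is forced to vanish (points at $d$-distance $0$ from everything) and absorb them in the degenerate case of $d_K$ cannot be carried out in general: since $d_K(p,q)=0$ means $\mathbf{t}(p)=\mathbf{t}(q)$, vanishing of $d_K$ is a \emph{transitive} relation on $\mathbb{K}^n$, so a point at distance $0$ from all others embeds only if the remaining points are also pairwise at distance $0$. Concretely, the three-point space with $d(0,1)=1$ and $d(0,2)=d(1,2)=0$ satisfies \ref{kiss2} and \ref{kiss3} (eigenvalues $1,-1,0$) but is not embeddable, and in any factorization $D_I=V^{\mathsf T}QV$ the third vector is forced to be zero. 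So this is not a gap you can repair: it is an edge case in which the theorem as stated fails, and the paper's own proof silently assumes it away as well (its construction sets $\phi(\sigma_i)=1/(D_I)_{ik}$, which divides by zero there, and its Schur complements require the chosen $2\times2$ block to be invertible). Apart from making such a non-degeneracy hypothesis explicit, your outline is a valid proof.
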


Our proof is inspired by the proofs in \cite{deza1997}*{Sect. 6.2}.
It will use the notion of \emph{Schur complement}:
Consider a block matrix 
$\bigl(\begin{smallmatrix} 
  A&B\\C&D
\end{smallmatrix}\bigr)$, 
where $A$ and $D$ are square matrices.
After a block Gaussian elimination, it becomes 
$\bigl(\begin{smallmatrix} 
  P&0\\0&D 
\end{smallmatrix}\bigr)$, 
where $P=A-BD^{-1}C$ is called the Schur complement of $D$.

\begin{proof}
  Let $|I|=k+1$ and label the elements of $I$ by $0,\ldots,k$.
  If $(I,d)$ is isometrically embeddable into $(\mathbb{K}^n,d_K)$, 
  we can choose an embedding $\sigma:I\to\mathbb{K}^n$ such that $\sigma_k$ is the hyperplane at distance $1$.
  This is always possible, because $d_K$ is invariant under the action of $\text{M\"ob}(n-1)$.
  We then write the distance matrix $D_I$ explicitly.
  It will be in the form
  \begin{equation}\label{CMKiss}
    D_I=\begin{pmatrix}	
      D_{I\setminus\{k\}} & \Phi \\ 
      \Phi^t & 0 
    \end{pmatrix}
  \end{equation}
  where $\Phi$ denotes a $k\times 1$ column matrix whose $i$-th entry is $1/\phi(\sigma_i)$ for $0\leq i<k$.

  Consider the sub-distance-matrix $D_{\{0,k\}}$, one can easily verify that its Schur complement, denoted by $P_I$, is in the form
  \begin{equation*}
    \begin{split}
      (P_I)_{ij}&=\frac{d_E(\t(\sigma_i),\t(\sigma_j))^2-d_E(\t(\sigma_i),\t(\sigma_0))^2-d_E(\t(\sigma_0),\t(\sigma_j))^2}{\phi(\sigma_i)\phi(\sigma_j)}\\
      &=-2\left\langle\frac{\t(\sigma_i)-\t(\sigma_0)}{\phi(\sigma_i)},\frac{\t(\sigma_j)-\t(\sigma_0)}{\phi(\sigma_j)}\right\rangle
    \end{split}
  \end{equation*}
  for $i,j\in I\setminus\{0,k\}$, where $\langle\cdot,\cdot\rangle$ is the Euclidean inner product. 
  Since $I\setminus\{k\}$ is embedded by $\sigma$ into $\mathbb{K}^n$, 
  $\mathbf v_i=\t(\sigma_i)-\t(\sigma_0)$ are vectors in the $(n-1)$-dimensional hyperplane $x_0=0$.
  Therefore, $P_I$ is negative semi-definite, whose rank is at most $n-1$.

  Conversely, let $P_I$ be the Schur complement of the submatrix $D_{\{0,k\}}$.
  If it is negative semi-definite with rank at most $n-1$, it can be written in the form $(P_I)_{ij}=-2\langle\mathbf v_i, \mathbf v_j\rangle$ for $0<i,j<k$, 
  where $\mathbf v_i$ are vectors in an $(n-1)$-dimensional hyperplane, which can be, without loss of generality, assumed to be $x_0=0$.
  Then an embedding can be constructed by setting $\sigma_k$ as the hyperplane at distance $1$, and
  \begin{align*}
    \phi(\sigma_i)&=1/(D_I)_{ik}\\
    \t(\sigma_i)&=\begin{cases}
      \mathbf v_i/(D_I)_{ik} & \text{if } i>0\\
      0 & \text{if } i=0
    \end{cases}
  \end{align*}
  for $0\leq i<k$.

  We have proved that $(I,d)$ is isometrically embeddable into $(\mathbb{K}^n,d_K)$, if and only if $P_I$ is negative semi-definite of rank at most $n-1$.
  We also have the following relations for the Schur complement
  \begin{subequations}\label{schur}
    \begin{align}
      \det D_I&=-\frac{\det P_I}{\phi(\sigma_0)^2}\label{det}\\
      \text{In}D_I&=(1,1,0)+\text{In}P_I\label{In}\\
      \text{rk}D_I&=\text{rk}P_I+2\label{rk}
    \end{align}
  \end{subequations}
  where $\text{rk}M$ is the rank of $M$, and $\text{In}M$ is the inertia of matrix $M$, 
  which is a triple indicating (in order) the number of positive, negative and zero eigenvalues of $M$.
  Here we use the convention that the determinant of an empty matrix is $1$. 

  These relations allow us to express the negative semi-definiteness and the rank of $P_I$ by the distance matrices.

  A principal submatrix of $P_I$ is of the form $P_J$ for some subset $J$ of $I$ such that $J\supseteq\{0,k\}$.
  $P_I$ is negative semi-definite, if and only if for any principal submatrix $P_J$, $(-1)^{|J|}\det P_J\geq 0$.
  Notice that the choice of $\{0,k\}$ is totally arbitrary, since we can always apply a permutation on $I$, to bring any index to $0$ or $k$.
  So we have $(-1)^{|J|}\det P_J\geq 0$ for all $J\subseteq I$ (the case $|J|\leq 1$ is trivial).
  Then Relations \eqref{det} and \eqref{rk} prove $\ref{kiss1}\Leftrightarrow\ref{kiss2}$.

  $P_I$ is negative semi-definite, if and only if all its eigenvalues are nonpositive. 
  Then Relations \eqref{In} and \eqref{rk} prove $\ref{kiss1}\Leftrightarrow\ref{kiss3}$.
\end{proof}

Equation \eqref{CMKiss} shows that the distance matrix for kissing spheres is playing dual roles:
It combines the power of the distance matrix and the Cayley-Menger matrix.
Alternatively, this can be seen by comparing Theorem \ref{EmbE} and Theorem \ref{EmbK}, 

We also notice from \eqref{CMKiss} that if $\phi(\sigma_i)=1$ for all $i\in I\setminus\{k\}$, 
$\Phi=e$ and $D_I$ degenerates to an Euclidean Cayley-Menger matrix.

\section{Embedding into the lightcone}\label{sec:LCone}
We now show that the kissing spheres in $\hat{\mathbb{E}}^n$ can be embedded into the Minkowski space $\mathbb{R}^{n,1}$,
which will be useful later for the study of distance completion problem.

The \emph{Minkowski space} $\mathbb{R}^{n,1}$ is an $(n+1)$-dimensional vector space with an indefinite inner product of signature $(n,1)$.
Explicitely, with the coordinate system $\x=(x_0,\ldots,x_{n-1},t)$, 
the \emph{Minkowskian inner product} $\langle\cdot,\cdot\rangle_{n,1}$ is defined as 
$$
\langle\x,\x'\rangle_{n,1}=x_0x'_0+\ldots+x_{n-1}x'_{n-1}-tt'
$$
for two vectors $\x$ and $\x'$ in $\mathbb{R}^{n,1}$.
A vector $\x$ is \emph{space-like \textup{(}resp. null, time-like\textup{)}} if $\langle\x,\x\rangle_{n,1}$ is positive (resp. zero, negative).
The \emph{lightcone} is the set of null vectors,
i.e. $\mathbb{L}^n=\{\x\mid\langle\x,\x\rangle_{n,1}=0\}$.
A vector $\x$ is \emph{future- \textup{(}resp. past-\textup{)} directed} if $t$ is positive (resp. negative).

Let $(I,d)$ be a distance space isometrically embeddable into $(\mathbb{K}^n,d_K)$.
Since $D_I$ has exactly one positive eigenvalue and at most $n$ negative eigenvalues,
we can decompose it into $D_I=Q^t\Lambda Q$,
where $\Lambda$ is an $(n+1)\times(n+1)$ diagonal matrix, with $1$ as its last entry, and all other entries being $-1$.
The columns of $Q$ are vectors in the Minkowski space $\mathbb{R}^{n,1}$, indexed by the elements of $I$.
We can therefore write $(D_I)_{ij}=-\langle\x_i,\x_j\rangle_{n,1}$ 
for a system of vectors $\{\x_i\}_{i\in I}$ in $\mathbb{R}^{n,1}$. 

Since $\langle\x_i,\x_i\rangle_{n,1}=-(D_I)_{ii}=0$, all the vectors $\x_i$ are on the lightcone $\mathbb{L}^n$.

Since for all $i\neq j$ 
$$
-\langle\x_i,\x_j\rangle_{n,1}=\frac{1}{2}\langle\x_i-\x_j,\x_i-\x_j\rangle_{n,1}=(D_I)_{ij}\geq 0
$$
the difference between any two vectors can not be time-like.
Therefore, $\{\x_i\}_{i\in I}$ have to be either all future-directed, or all past-directed.
The Minkowskian inner product induces a pre-metric 
$$
d_M(\x,\x')^2:=-\langle\x,\x'\rangle_{n,1}
$$
on the future-directed lightcone $\mathbb{L}^n_+$.
We can therefore view $D_I$ as the Minkowskian distance matrix for a set of future-directed null vectors.

In fact, we just proved the following theorem:
\begin{thm}\label{lightcone}
  $(\mathbb{K}^n,d_K)$ is isometric to $(\mathbb{L}^n_+,d_M)$.
\end{thm}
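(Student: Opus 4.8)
The plan is to exhibit an explicit distance-preserving bijection $\Psi\colon\mathbb{K}^n\to\mathbb{L}^n_+$ and to verify its defining properties by hand. This is necessary because the discussion preceding the statement only realises each \emph{finite} subset of $(\mathbb{K}^n,d_K)$ on the future lightcone, via the factorisation $D_I=Q^t\Lambda Q$; it does not by itself produce a single coherent map defined on all of $\mathbb{K}^n$, nor does it show that \emph{every} future null vector is attained. For a kissing sphere $p$ of finite diameter, with tangent point $\t(p)$ on the hyperplane $x_0=0$, I would set
\[
\Psi(p)=\frac{1}{\phi(p)}\Bigl(\tfrac12-\|\t(p)\|^2,\ \sqrt2\,\t(p),\ \tfrac12+\|\t(p)\|^2\Bigr),
\]
where $\|\cdot\|$ is the Euclidean norm on $x_0=0$, the first and last entries are the $x_0$- and $t$-coordinates of $\mathbb{R}^{n,1}$, and the middle block occupies $x_1,\dots,x_{n-1}$; for a hyperplane $p$ at distance $h$ I would set $\Psi(p)=h\,(-1,0,\dots,0,1)$.

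First I would check that $\Psi$ lands in $\mathbb{L}^n_+$: a direct computation gives $\langle\Psi(p),\Psi(p)\rangle_{n,1}=0$, and the $t$-coordinate $\tfrac{1}{\phi(p)}(\tfrac12+\|\t(p)\|^2)$ (resp.\ $h$) is strictly positive, so $\Psi(p)$ is future-directed. Next I would verify the isometry $d_M(\Psi(p),\Psi(q))^2=-\langle\Psi(p),\Psi(q)\rangle_{n,1}=d_K(p,q)^2$. Writing $\eta(\t(p))=\phi(p)\Psi(p)$, the entire content of this step is the elementary identity $-\langle\eta(\t(p)),\eta(\t(q))\rangle_{n,1}=d_E(\t(p),\t(q))^2$, after which the factors $1/\phi(p)$ and $1/\phi(q)$ reproduce exactly \eqref{normal}; the hyperplane case reproduces \eqref{infinite} in the same way, and the degenerate case $\t(p)=\t(q)$ gives $0$ on both sides. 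The chosen vector for a hyperplane is precisely the limit of $\Psi$ along spheres whose tangent points escape to infinity while $\|\t\|^2/\phi\to h$, so $\Psi$ is the natural continuous extension.

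The substantive step is bijectivity, which I would prove by writing down the inverse. Given a future null vector $\x=(x_0,\dots,x_{n-1},t)\in\mathbb{L}^n_+$, the null condition $x_0^2+\sum_{i\geq1}x_i^2=t^2$ together with $t>0$ forces $|x_0|\leq t$, hence $x_0+t\geq0$, with equality exactly when $x_0=-t$ and $x_1=\dots=x_{n-1}=0$. When $x_0+t>0$ the preimage is the finite kissing sphere with $\phi=1/(x_0+t)$ and $\t=\tfrac{1}{\sqrt2\,(x_0+t)}(x_1,\dots,x_{n-1})$; when $x_0+t=0$ it is the hyperplane at distance $t$. This shows $\Psi$ is onto, and the same formulas recover $(\phi(p),\t(p))$ uniquely from $\Psi(p)$, so $\Psi$ is injective. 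The degeneracy of the pre-metric corresponds exactly to rescaling along a null ray: two kissing spheres with the same tangent point but different diameters map to distinct yet linearly dependent null vectors, whose $d_M$-distance is $0$, matching $d_K=0$.

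I expect the main obstacle to be the surjectivity bookkeeping rather than any deep difficulty: one must check that the inequality $x_0+t\geq0$ on the forward cone is \emph{exactly} what partitions $\mathbb{L}^n_+$ into the images of finite spheres and of hyperplanes, with the origin (excluded from $\mathbb{L}^n_+$, being non-future-directed) corresponding to no kissing sphere. As a conceptual cross-check one may instead argue group-theoretically: the Lorentz transformations preserving the forward cone act transitively on $\mathbb{L}^n_+$ and preserve $d_M$, while $\text{M\"ob}(n-1)$ acts transitively on $\mathbb{K}^n$ and preserves $d_K$ by Theorem \ref{distance}; the classical identification of these two groups, matching a single basepoint such as the hyperplane at distance $1$ with $(-1,0,\dots,0,1)$, then upgrades any equivariant map to the global isometry.
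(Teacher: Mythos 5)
Your proof is correct, and it is worth noting where it diverges from the paper's treatment. The paper's official route to Theorem \ref{lightcone} is the matrix factorization that precedes it: for a finite $(I,d)$ embeddable in $(\mathbb{K}^n,d_K)$, Theorem \ref{EmbK} gives the inertia of $D_I$, so $D_I=Q^t\Lambda Q$ writes $(D_I)_{ij}=-\langle\x_i,\x_j\rangle_{n,1}$ with all $\x_i$ on one nappe of the lightcone; the paper then declares the theorem proved and records the explicit map \eqref{Embed0} with the verification left as ``easy''. You correctly point out the gap in that logic --- the factorization argument only realizes finite subsets and produces neither a single global map nor surjectivity onto $\mathbb{L}^n_+$ --- and you repair it by working entirely with the explicit isometry: your $\Psi$ differs from \eqref{Embed0} only by a Lorentz boost in the $(x_0,t)$-plane (in lightcone coordinates $u=t+x_0$, $v=t-x_0$ the paper's map has $u\equiv\sqrt2/\phi$ and yours $u\equiv1/\phi$), so the two are interchangeable, and your verification correctly reduces the isometry property to \eqref{normal}, \eqref{infinite} and the degenerate case. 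The genuinely new content in your write-up is the bijectivity argument: the observation that $x_0+t\geq0$ on $\mathbb{L}^n_+$, with equality exactly on the ray of hyperplane-images, together with the explicit inverse $\phi=1/(x_0+t)$, $\t=(x_1,\dots,x_{n-1})/\bigl(\sqrt2\,(x_0+t)\bigr)$, is precisely what the paper never writes down, and it is what upgrades ``every finite configuration embeds'' to ``$(\mathbb{K}^n,d_K)$ is isometric to $(\mathbb{L}^n_+,d_M)$''. What the paper's route buys is conceptual economy --- the lightcone emerges from the inertia of $D_I$, tying the theorem to the embeddability result; what your route buys is a complete and self-contained proof of the global statement, including the surjectivity bookkeeping that distinguishes images of finite kissing spheres from images of hyperplanes.
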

Explicitly, let $\Psi$ be the isometry, then for a kissing sphere $p\in\mathbb{K}^n$ of finite diameter,
we have
\begin{equation}\label{Embed0}
  \Psi(p)=\frac{\sqrt{2}}{2\phi(p)}\left(1-\|\t(p)\|_2^2,2\t(p),1+\|\t(p)\|_2^2\right)
\end{equation}
It is easy to verify that $\langle\Psi(p),\Psi(p)\rangle_{n,1}=0$ and $d_K(p,q)=d_M(\Psi(p),\Psi(q))$.
We can extend $\Psi$ to kissing spheres of infinit diameter,
by setting $$\Psi(p)=\frac{\sqrt{2}}{2}(-h,0,\ldots,0,h)$$ if $p$ is a hyperplane at distance $h$.

We now make some remarks about this embedding: 

\begin{rem}
  If the reference ball has a non-zero curvature $\kappa$,
  we can either make it zero by a M\"obus transformation before applying the isometry in \eqref{Embed0}),
  or directly apply the following embedding to a kissing sphere $p$:
  \begin{equation}\label{refball}
    \Psi(p)=\left(\frac{\sqrt{2}}{2}+\frac{\sqrt{2}}{\kappa\phi(p)}\right)(\hat\t(p),1)
  \end{equation}
  where $\phi(p)$ is now the signed diameter, 
  negative if the sphere surrounds the reference ball,
  while $\hat\t(p)$ now means the unit direction vector of the tangent point,
  taking the center of the reference ball as the origin.
  This can be used to define a distance function for kissing spheres to a reference ball of non-zero curvature.
\end{rem}

\begin{rem}
  In the projective model of M\"obius geometry, points are mapped to null directions \cite{cecil2008}*{Equation 2.3}.
  In fact, our isometry maps the tangent points of kissing spheres to null directions in the same way,
  and use the vector lengths to distinguish different kissing spheres with a same tangent point. 
\end{rem}

\begin{rem}
  The invariance of $D_I$ under the action of $\text{M\"ob}(n-1)$ 
  is reflected in $\mathbb{R}^{n,1}$ as the invariance under Lorentz transformations that preserves the direction of time. 
  Indeed, $\text{M\"ob}(n-1)$ is isomorphic to the orthochronous Lorentz group $O_+(n,1)$ \cite{cecil2008}*{Corollary 3.3}.
\end{rem}

\begin{rem}
  A continuous version of Theorem \ref{lightcone} can be found in \cite{brinkmann1923}, 
  which states that a Riemann space is conformally Euclidean if and only if it can be embedded into the lightcone.  
  A generalisation for conformally flat Riemann manifolds can be found in \cite{asperti1989}
\end{rem}

We now show again that distance geometry for kissing spheres may degenerate to Euclidean distance geometry.

A normal vector $\y$ and a real number $c$ determines a hyperplane $H=\{\x\mid\langle\x,\y\rangle_{n,1}=c\}$.
A hyperplane is said to be \emph{space-like \textup{(}resp. null, time-like\textup{)}} if its normal vector is time-like (reps. null, space-like).

Let $I$ be a set of kissing spheres. 
If for all $p\in I$, $\Psi(p)$ lies on a same null-hyperplane $H$,
$H$ can be written in the form $H=\{\x\mid-\langle\x,\y\rangle_{n,1}=1\}$, where $\y$ is a null vector.
Therefore $\Psi^{-1}(\y)$ is tangent to all the elements of $I$.
We can find a Lorentz transformation $\mathcal{L}$ that sends $\y$ to $(-\frac{\sqrt{2}}{2},0,\cdots,0,\frac{\sqrt{2}}{2})$,
then for all $p\in I$, $\Psi^{-1}\mathcal{L}\Psi(p)$ is a unit kissing sphere (kissing spheres of unit diameter)
since they are tangent to the hyperplane at distance $1$ from the reference ball.
Therefore $D_I$ degenerates to Euclidean distance geometry,
as discussed in the end of Section \ref{sec:Embed}.

If for all $p\in I$, $\Psi(p)$ lies on a same space-like hyperplane $H$,
then we can find a Lorentz transformation $\mathcal{L}$ that sends $H$ to a time-constant hyperplane. 
The intersection of $\mathcal{L}H$ with the lightcone is an $(n-1)$-sphere, and $d_M$ degenerates to Euclidean distance $d_E$.
This is also observed in \cite{seidel1995}*{Theorem 4.5.3}.
One way to view this is to consider unit kissing spheres kissing a reference ball of \emph{finite} radius.
It turns out that $d_K$, induced by \eqref{refball}, equals the Euclidean distance between their centers.
\label{Lightcone}
\section{Distance completion problem}\label{sec:Compl}
We now study the distance completion problem in $(\mathbb{K}^n,d_K)$.
The results for embeddability problem tell us that
\begin{thm}\label{CompM}
  $(G,\ell)$ is completable in $(\mathbb{K}^n,d_K)$, 
  if and only if there is a non-negative symmetric matrix $D$ satisfying
  \begin{enumerate}
      \renewcommand{\theenumi}{\textbf{C\arabic{enumi}}}
      \renewcommand{\labelenumi}{\rm (\theenumi)}
    \item $D_{uv}=0$ if $u=v$. \label{comp1}
    \item $D_{uv}=\ell(u,v)^2$ if and only if $(u,v)\in E$. \label{comp2}
    \item the rank of $D$ is at most $n+1$. \label{comp3}
    \item $D$ has exactly one positive eigenvalue. \label{comp4}
  \end{enumerate}
\end{thm}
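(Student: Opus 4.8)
The plan is to read Theorem~\ref{CompM} as a direct corollary of Theorem~\ref{EmbK}, the only new ingredient being the bookkeeping that turns the completability definition into a matrix-existence statement. The bridge I would use is the elementary correspondence between pre-metrics on $V$ and non-negative symmetric matrices with vanishing diagonal: to a pre-metric $d_V$ assign the matrix $D$ with $D_{uv}=d_V(u,v)^2$, and conversely to such a matrix assign $d_V(u,v)=\sqrt{D_{uv}}$. Symmetry and non-negativity of $d_V$ match those of $D$, and $d_V(u,u)=0$ matches the zero-diagonal condition~\ref{comp1}; under this correspondence $D$ is exactly the distance matrix $D_V$ of $(V,d_V)$, so the eigenvalue criterion of Theorem~\ref{EmbK} transfers without change.

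For the forward implication I would start from a completion: by definition there is a pre-metric $d_V$ with $(V,d_V)$ isometrically embeddable into $(\mathbb{K}^n,d_K)$ and $d_V(u,v)=\ell(u,v)$ on every edge. Taking $D=D_V$ gives \ref{comp1} immediately, and on edges $D_{uv}=d_V(u,v)^2=\ell(u,v)^2$, which yields \ref{comp2}. Since $(V,d_V)$ embeds, statement~\ref{kiss3} of Theorem~\ref{EmbK} tells me $D$ has exactly one positive eigenvalue and at most $n$ negative eigenvalues; the former is \ref{comp4}, and combined with the vanishing of the remaining eigenvalues it forces $\mathrm{rk}\,D = 1 + (\text{number of negative eigenvalues}) \le n+1$, which is \ref{comp3}.

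For the converse I would start from a matrix $D$ satisfying \ref{comp1}--\ref{comp4} and set $d_V(u,v)=\sqrt{D_{uv}}$. By \ref{comp1} together with the symmetry and non-negativity of $D$ this is a pre-metric, and by \ref{comp2} it agrees with $\ell$ on $E$. Conditions \ref{comp3} and \ref{comp4} together say precisely that $D$ has one positive and at most $n$ negative eigenvalues, so the implication \ref{kiss3}$\Rightarrow$\ref{kiss1} of Theorem~\ref{EmbK} embeds $(V,d_V)$ isometrically into $(\mathbb{K}^n,d_K)$, and $(G,\ell)$ is completable.

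Because the analytic content is inherited wholesale from Theorem~\ref{EmbK}, I expect no serious obstacle; the only point needing care is the biconditional in \ref{comp2}. Since $\ell$ is defined only on $E$, the equation ``$D_{uv}=\ell(u,v)^2$'' carries meaning only for edges, so I would read \ref{comp2} as fixing $D$ to the prescribed squared lengths exactly on $E$ while leaving the non-edge entries free --- and it is precisely these free entries that play the role of the unknown distances to be completed. Making this reading explicit is what matches the matrix-completion reformulation to the degrees of freedom in the completability problem, and it is the form in which the statement will be fed into the chordality arguments of the following completion theorem.
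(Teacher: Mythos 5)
Your proposal is correct and matches the paper's approach exactly: the paper states Theorem~\ref{CompM} with no written proof at all, introducing it only by the phrase ``the results for embeddability problem tell us that,'' i.e.\ as an immediate corollary of Theorem~\ref{EmbK}, which is precisely the reduction you carry out. Your bookkeeping (the pre-metric/matrix correspondence, the equivalence of \ref{comp3} together with \ref{comp4} with statement \ref{kiss3} of Theorem~\ref{EmbK}, and the reading of \ref{comp2} as constraining only the edge entries) supplies the details the paper leaves implicit, and it is all sound.
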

$D$ is in fact the distance matrix corresponding to a distance function realising the given edge lengths.
We call $D$ a \emph{target matrix}.

This theorem transforms the distance completion problem to the matrix completion problem:
some entries of the matrix being given (\ref{comp1} and \ref{comp2}), find the value for the other entries, 
so that the rank of matrix is low (\ref{comp3}).
We refer to \cites{johnson1990, candes2009} for more details about matrix completion.

Comparing to the classical matrix completion problem, \ref{comp4} is new.
The target matrix is usually positive semi-definite, but here we need it to be indefinit.
The result in the previous section can help here.

If $(G,\ell)$ is completable in $(\mathbb{K}^n,d_K)$, then for every clique $K$ of $G$, $(K,\ell)$ is completable in $(\mathbb{K}^n,d_K)$.
The inverse is in general not true.
Define two sets as in Euclidean case,
\begin{align*}
  \mathcal{C}_K^n(G)&=\{\ell:E\to\mathbb{R}_{\geq 0}\mid(G,\ell)\text{ is completable in }(\mathbb{K}^n,d_K)\}\\
  \mathcal{K}_K^n(G)&=\{\ell:E\to\mathbb{R}_{\geq 0}\mid\text{for all cliques }K\in G, (K,\ell)\text{ is completable in }(\mathbb{K}^n,d_K)\}
\end{align*}
We now prove our second main theorem 
\begin{thm}\label{CompK}
  $\mathcal{C}_K^n(G)=\mathcal{K}_K^n(G)$ if and only if $G$ is chordal.
\end{thm}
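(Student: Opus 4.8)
The plan is to follow the structure of Laurent's proof of the Euclidean completion theorem (Theorem~\ref{CompE}), replacing positive semi-definite completion by the indefinite variant dictated by Theorem~\ref{CompM}. First I would translate the whole problem into the lightcone picture of Section~\ref{sec:LCone}: by Theorems~\ref{lightcone} and \ref{CompM}, completing $(G,\ell)$ in $(\mathbb{K}^n,d_K)$ is the same as assigning to each vertex $v$ a future-directed null vector $\x_v\in\mathbb{R}^{n,1}$ with $-\langle\x_u,\x_v\rangle_{n,1}=\ell(u,v)^2$ on every edge, the target matrix $D$ being the resulting Minkowskian Gram matrix. Writing a future null vector as $\x_v=r_v(\omega_v,1)$ with $\omega_v\in S^{n-1}$ and $r_v>0$, the edge condition becomes $r_ur_v\bigl(1-\langle\omega_u,\omega_v\rangle\bigr)=\ell(u,v)^2$, which exhibits a completion as a conformal Euclidean placement of tangent directions (compare \eqref{normal}). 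In this language the inertia requirements \ref{comp3} and \ref{comp4} say exactly that all $\x_v$ lie in the single space $\mathbb{R}^{n,1}$ and that at least one pair is at positive distance, so the completion problem is a Gram-matrix completion in Minkowski space, the indefinite analogue of the classical positive semi-definite completion.

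\emph{Sufficiency.} The inclusion $\mathcal{C}_K^n(G)\subseteq\mathcal{K}_K^n(G)$ is immediate, so I would assume $G$ chordal and prove $\mathcal{K}_K^n(G)\subseteq\mathcal{C}_K^n(G)$ by induction using the clique-tree structure of chordal graphs. A non-complete chordal graph splits as $G=G_1\cup G_2$ with $G_1,G_2$ chordal and $G_1\cap G_2=S$ a clique separating the two parts. Given $\ell\in\mathcal{K}_K^n(G)$, its restrictions lie in $\mathcal{K}_K^n(G_1)$ and $\mathcal{K}_K^n(G_2)$, so by induction they are completable; realize the completions as future null configurations $\{\x_v\}_{v\in V(G_1)}$ and $\{\y_v\}_{v\in V(G_2)}$ in $\mathbb{R}^{n,1}$. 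Since every pair inside $S$ is an edge, the two configurations have the same Gram matrix on $S$; by Witt's extension theorem there is a Lorentz transformation carrying $\{\y_s\}_{s\in S}$ onto $\{\x_s\}_{s\in S}$, and as both families are future-directed it can be chosen orthochronous. After applying it, the union $\{\x_v\}_{v\in V(G_1)}\cup\{\y_v\}_{v\in V(G_2)\setminus S}$ is a single family of future null vectors whose Gram matrix is a target matrix $D$ for $(G,\ell)$: the missing entries (between the non-adjacent sets $V(G_1)\setminus S$ and $V(G_2)\setminus S$) are supplied by the union, the rank is at most $n+1$ because everything lives in $\mathbb{R}^{n,1}$, and the form restricted to the span has signature $(p,1)$ with $p\le n$ as soon as one distance is positive, so $D$ has exactly one positive eigenvalue. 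The base case of a single clique is trivial, completing the induction.

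\emph{Necessity.} If $G$ is not chordal it contains a chordless cycle $v_1v_2\cdots v_kv_1$ with $k\ge 4$, whose only cliques are its edges; since every single edge length is realizable, each such $\ell$ already lies in $\mathcal{K}_K^n(G)$, and it suffices to exhibit cycle lengths admitting no global completion. Travelling around the cycle and taking logarithms of the edge equations $r_{i}r_{i+1}\bigl(1-\langle\omega_i,\omega_{i+1}\rangle\bigr)=\ell(v_i,v_{i+1})^2$ turns the unknown conformal factors $r_i$ into the solution of a linear system governed by the cycle; its solvability forces a single multiplicative consistency condition relating the prescribed lengths to the (adjustable) positions $\omega_i\in S^{n-1}$. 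I would then choose the lengths so that this consistency cannot be met by any admissible placement of the $\omega_i$, producing $\ell\in\mathcal{K}_K^n(G)\setminus\mathcal{C}_K^n(G)$.

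The routine part is the inertia bookkeeping in the gluing step; the two points deserving care are the orthochronous Witt extension (which must cope with a possibly degenerate span of $S$) and, above all, the counterexample in the necessity direction. Unlike the Euclidean setting, where the triangle inequality obstructs completion independently of the ambient dimension, $d_K$ is merely a pre-metric, so here the obstruction must come entirely from the rank and one-positive-eigenvalue condition of Theorem~\ref{EmbK}. The main difficulty is thus to verify that the freedom of placing the $\omega_i$ on $S^{n-1}$ cannot absorb the violated consistency for the fixed dimension $n$; this is where the choice of the chordless cycle and of its lengths must be made with $n$ in mind, and it is the step I expect to be the crux of the proof.
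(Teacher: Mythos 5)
Your sufficiency half is correct and is essentially the paper's own argument: the paper proves a clique-sum lemma (Lemma \ref{CliqueSum}) by lifting the two partial completions to the future lightcone via Theorem \ref{lightcone} and matching them on the shared clique with a Lorentz transformation, then invokes the fact that chordal graphs are built from cliques by clique-sums; your induction along clique separators, with Witt's extension theorem making the orthochronous Lorentz map explicit, is the same proof with the gluing step spelled out in more detail.

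The necessity half, however, has a genuine gap, and the step you yourself single out as the crux is in fact impossible: \emph{no strictly positive length function on a chordless cycle can fail to be completable}, so the consistency-violation counterexample you are looking for does not exist. Your scheme implicitly assumes all cycle lengths positive (you take logarithms of the edge equations). For an odd cycle the cyclic system $\rho_i+\rho_{i+1}=c_i$ for $\rho_i=\log r_i$ is nonsingular, so there is no consistency condition at all and every positive assignment completes. For an even cycle there is exactly one alternating-sum condition, but it can always be met by the $\omega_i$: already for a $4$-cycle in $\mathbb{K}^2$, taking tangent directions with consecutive angle gaps $a,\ \pi-a,\ a,\ \pi-a$ makes the alternating sum of the terms $\log(1-\cos a_i)$ equal to $\log\tan^4(a/2)$, which sweeps all of $\mathbb{R}$ as $a$ runs over $(0,\pi)$; longer cycles and larger $n$ only add freedom, and the resulting Gram matrix of future null vectors is automatically a valid target matrix for Theorem \ref{CompM}. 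The true obstruction is of a different nature, and your remark that it ``must come entirely from the rank and one-positive-eigenvalue condition'' is the misstep: although $d_K$ has no triangle inequality, the relation $d_K(p,q)=0$ is still \emph{transitive}, because by \eqref{normal} it holds exactly when $\t(p)=\t(q)$. The paper exploits precisely this degeneracy: it sets $\ell(e)=1$ if $e=e_0$ (one chosen cycle edge) or if $e$ has exactly one end in the cycle $C$, and $\ell(e)=0$ otherwise. Any completion would force the tangent points of all cycle vertices to coincide by transitivity along the zero-length path, contradicting $d_K=1$ across $e_0$; yet every clique of $G$ remains completable, since a clique meets the chordless cycle in at most one edge and a zero $d_K$-distance does not force two kissing spheres to be equal. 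Note also that your reduction ``the cycle's only cliques are its edges'' ignores cliques of $G$ containing cycle vertices together with outside vertices; this is exactly why the paper must also prescribe the lengths on edges with one end in $C$.
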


This is almost the same as Theorem \ref{CompE}. 
In fact, we have employed the proof techniques in \cite{laurent1998}, but with some necessary adaptions.

\begin{proof}[Proof of ``only if'']
  If $G$ is not chordal, consider a chordless cycle $C$ of length at least 4, and pick an edge $e_0\in C$.
  Construct a length function $\ell$ by setting $\ell(e)=1$ if $e$ has exactly one end in $C$ or if $e=e_0$, otherwise $\ell(e)=0$.
  Then $\ell\in\mathcal{K}_K^n(G)$ but $\ell\notin\mathcal{C}_K^n(G)$.
\end{proof}

If two graphs each has a clique of a same size, the \emph{clique-sum} glues them together by identifying that clique.
In the language of mathematics, consider a graph $G=(V,E)$ and two of its subgraphs $G_1=(V_1,E_1)$ and $G_2=(V_2,E_2)$.
$G$ is the clique-sum of $G_1$ and $G_2$, if $V=V_1\cup V_2$ and $W=V_1\cap V_2$ induces a clique in $G$, 
and there is no edge joining a vertex in $V_1\setminus W$ and a vertex in $V_2\setminus W$.

\begin{lem}\label{CliqueSum}
  Let $G$ be a clique-sum of $G_1(V_1,E_1)$ and $G_2(V_2,E_2)$. 
  If $\mathcal{C}_K^n(G_i)=\mathcal{K}_K^n(G_i)$ for $i=1,2$, then $\mathcal{C}_K^n(G)=\mathcal{K}_K^n(G)$.
\end{lem}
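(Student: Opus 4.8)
The plan is to prove the two inclusions of $\mathcal{C}_K^n(G)=\mathcal{K}_K^n(G)$ separately. The inclusion $\mathcal{C}_K^n(G)\subseteq\mathcal{K}_K^n(G)$ holds for every graph, since a completion of the whole graph restricts to a completion of each clique; so the entire content is the reverse inclusion $\mathcal{K}_K^n(G)\subseteq\mathcal{C}_K^n(G)$. Fix $\ell\in\mathcal{K}_K^n(G)$. Because $G$ is the clique-sum of $G_1$ and $G_2$ with no edge joining $V_1\setminus W$ to $V_2\setminus W$, every clique of $G$ is contained in $V_1$ or in $V_2$; hence every clique of $G_i$ is a clique of $G$, and the restriction $\ell_i:=\ell|_{E_i}$ makes every clique of $G_i$ completable, i.e. $\ell_i\in\mathcal{K}_K^n(G_i)$. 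Applying the hypothesis $\mathcal{K}_K^n(G_i)=\mathcal{C}_K^n(G_i)$ produces target matrices $D^{(1)},D^{(2)}$, indexed by $V_1,V_2$, each satisfying conditions (\ref{comp1})--(\ref{comp4}) of Theorem \ref{CompM}. Since $W=V_1\cap V_2$ induces a clique in $G$, every pair in $W$ is an edge, so both $D^{(1)}$ and $D^{(2)}$ have $(u,v)$-entry $\ell(u,v)^2$ for $u,v\in W$; in particular their principal submatrices on $W$ coincide.

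Next I would pass to the lightcone picture of Theorem \ref{lightcone} and realise each $D^{(i)}$ as a Minkowskian distance matrix of future-directed null vectors: $D^{(1)}_{uv}=-\langle\x_u,\x_v\rangle_{n,1}$ for $u,v\in V_1$ and $D^{(2)}_{uv}=-\langle\y_u,\y_v\rangle_{n,1}$ for $u,v\in V_2$, with all $\x_u,\y_v\in\mathbb{L}^n_+$. Because the submatrices on $W$ agree, the vectors $\{\x_w\}_{w\in W}$ and $\{\y_w\}_{w\in W}$ have identical Minkowskian Gram matrices, so $\x_w\mapsto\y_w$ is a linear isometry between their spans. By Witt's extension theorem this extends to some $L\in O(n,1)$ with $L\x_w=\y_w$ for all $w\in W$; I will argue that $L$ may be taken orthochronous, i.e. in $O_+(n,1)\cong\text{M\"ob}(n-1)$, so that it sends future-directed vectors to future-directed vectors. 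Replacing every $\x_u$ by $L\x_u$, I may then assume $\x_w=\y_w$ on $W$ while all vectors remain future-directed. I set $\mathbf z_v=\x_v$ for $v\in V_1$ and $\mathbf z_v=\y_v$ for $v\in V_2\setminus W$, and define $D_{uv}=-\langle\mathbf z_u,\mathbf z_v\rangle_{n,1}=d_M(\mathbf z_u,\mathbf z_v)^2$.

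It remains to check that $D$ is a target matrix for $(G,\ell)$. Condition (\ref{comp1}) is clear, as each $\mathbf z_v$ is null. Condition (\ref{comp2}) holds because every edge of $G$ lies in $E_1$ or $E_2$, where $D$ reproduces $D^{(1)}$ or $D^{(2)}$ and hence $\ell(u,v)^2$; moreover $D$ is symmetric and non-negative, since any two future-directed null vectors have non-positive Minkowskian inner product, so $D_{uv}\geq 0$ for all pairs, including the cross terms between $V_1\setminus W$ and $V_2\setminus W$. Condition (\ref{comp3}) is automatic: all $\mathbf z_v$ live in the $(n+1)$-dimensional space $\mathbb{R}^{n,1}$, so writing $D=-Z^tJZ$ with $J$ the Minkowskian form, $\operatorname{rk}D\leq\operatorname{rk}Z\leq n+1$. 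For condition (\ref{comp4}), the number of positive eigenvalues of $D=-Z^tJZ$ equals the number of negative (time-like) directions of the form restricted to the span of the $\mathbf z_v$, which is at most one since $J$ has signature $(n,1)$; it is exactly one because this span contains the span of $\{\x_v\}_{v\in V_1}$, and $D^{(1)}$, satisfying (\ref{comp4}), forces that smaller span already to contain a time-like direction. Thus $D$ satisfies all four conditions and, by Theorem \ref{CompM}, $\ell\in\mathcal{C}_K^n(G)$.

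The hard part will be the alignment step: producing $L$ and, above all, guaranteeing it orthochronous. The existence of $L\in O(n,1)$ is routine, but the spans of $\{\x_w\}$ and $\{\y_w\}$ may be degenerate — for instance when several distances inside $W$ vanish, so that the spans are null — and then $L$ is highly non-unique and a priori need not preserve time orientation. I expect to resolve this using that every $\x_w,\y_w$ is future-directed: if the common span contains a time-like vector then orthochronicity of $L$ is forced, and otherwise the span is null and I can correct the time orientation by composing $L$ with a reflection supported on the orthogonal complement, which contains a time-like direction and fixes every $\y_w$. Equivalently, one works throughout inside $O_+(n,1)\cong\text{M\"ob}(n-1)$, the group under which the distance matrix is invariant, so that orthochronicity is built in from the outset.
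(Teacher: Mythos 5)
Your route is the same as the paper's: restrict $\ell$ to the $G_i$, get target matrices from the hypothesis, realise them as future-directed null vectors via Theorem \ref{lightcone}, align the two realisations over $W=V_1\cap V_2$ by a Lorentz transformation, and glue. Your verification of \ref{comp1}--\ref{comp4} for the glued matrix is correct and more careful than the paper's own proof, which simply asserts that the aligning transformation $\mathcal{L}$ exists. However, you have the difficulty exactly backwards. Orthochronicity is free: any $L\in O(n,1)$ mapping one nonzero future-directed null vector to a nonzero future-directed null vector must preserve the two components of the lightcone, hence is orthochronous. What is \emph{not} routine --- and in fact can fail --- is the existence of any linear $L$ with $L\x_w=\y_w$ at all. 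Equal Gram matrices do not make $\x_w\mapsto\y_w$ well defined in Minkowski signature: for future-directed null vectors, $d_M(u,v)=0$ means $u$ and $v$ are positively \emph{proportional}, not equal, and the proportionality ratio is invisible to the Gram matrix. If all lengths inside $W$ vanish, one realisation may have $\x_{w'}=4\x_w$ while the other has $\y_{w'}=9\y_w$; both restrict to the zero Gram matrix on $W$, yet no linear map sends $\x_w\mapsto\y_w$ and $\x_{w'}\mapsto\y_{w'}$, so Witt's theorem never gets to act.

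This gap is fatal: the lemma as stated is false, and the paper's proof breaks at the same point. Let $G_1,G_2$ be triangles $uva$, $uvb$ glued along the edge $uv$, so $G=K_4-ab$ and $\mathcal{C}_K^n(G_i)=\mathcal{K}_K^n(G_i)$ holds trivially. Put $\ell(u,v)=0$, $\ell(u,a)=\ell(u,b)=1$, $\ell(v,a)=2$, $\ell(v,b)=3$. Both triangles are completable already in $\mathbb{K}^2$: by \eqref{normal} take $\t(p_u)=\t(p_v)$, $\phi(p_u)=1$, $\phi(p_v)=1/4$ (resp.\ $1/9$), and a unit sphere at tangent distance $1$ for $a$ (resp.\ $b$); hence $\ell\in\mathcal{K}_K^n(G)$ for all $n\geq 2$. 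But every symmetric matrix with zero diagonal that matches $\ell^2$ on the edges has the form
\begin{equation*}
D=\begin{pmatrix}0&0&1&1\\0&0&4&9\\1&4&0&s\\1&9&s&0\end{pmatrix}
=\begin{pmatrix}0&B\\B^t&C\end{pmatrix},
\qquad
\det D=(\det B)^2=25>0,
\quad B=\begin{pmatrix}1&1\\4&9\end{pmatrix},
\end{equation*}
independently of the free entry $s$ (swap the two row blocks, an even permutation, to get a block triangular matrix), whereas condition \ref{comp4} forces a $4\times4$ target matrix to have one positive and three nonpositive eigenvalues, hence $\det D\leq 0$. By Theorem \ref{CompM}, $\ell\notin\mathcal{C}_K^n(G)$ for every $n$. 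Geometrically: $d_K(p_u,p_v)=0$ forces $\t(p_u)=\t(p_v)$, after which the ratio $d_K(p_v,q)/d_K(p_u,q)=\sqrt{\phi(p_u)/\phi(p_v)}$ is the same for every other kissing sphere $q$, but the data demand ratio $2$ at $a$ and $3$ at $b$. So the lemma (and with it Theorem \ref{CompK}) needs a nondegeneracy hypothesis, for instance that the lengths inside the common clique $W$ are not all zero. Under that hypothesis your argument does go through and genuinely completes the paper's: a relation $\sum_w c_w\x_w=0$ makes $\sum_w c_w\y_w$ a null vector orthogonal to every $\y_w$, hence parallel to all of them, hence zero; so $\x_w\mapsto\y_w$ is a well-defined isometry of spans, Witt extends it to $O(n,1)$, and orthochronicity is automatic as noted above.
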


We use Theorem \ref{lightcone} to prove this lemma.
\begin{proof}[Proof of Lemma \ref{CliqueSum}]
  Let $\ell$ be an element in $\mathcal{K}_K^n(G)$.
  Obviously, $\ell\in\mathcal{K}_K^n(G_i)=\mathcal{C}_K^n(G_i)$, for $i=1,2$.
  Since $(G_1,\ell)$ and $(G_2,\ell)$ are $n$-completable, let $D_1$ and $D_2$ be the corresponding target matrices.
  We can find a system of future-directed null vectors $\x_u\in\mathbb{L}^n_+$ for $u\in V_1$ such that $(D_1)_{uv}=d_M(\x_u,\x_v)^2$ for $u,v\in V_1$,
  and $\y_u\in\mathbb{L}^n_+$ for $u\in V_2$ such that $(D_2)_{uv}=d_M(\y_u,\y_v)^2$ for $u,v\in V_2$.
  On the common clique, for all $u,v\in V_1\cap V_2$, we have $d_M(\x_u,\x_v)=d_M(\y_u,\y_v)=l(u,v)$.
  Therefore, there is a Lorentz transformation $\mathcal{L}$, such that $\mathcal{L}\x_u=\y_u$ for $u\in V_1\cap V_2$.
  Now we construct a system of vectors by setting $\mathbf z_u=\mathcal{L}\x_u$ for $u\in V_1$, and $\mathbf z_u=\y_u$ for $u\in V_2\setminus V_1$. 
  The matrix $D_{uv}=d_M(\mathbf z_u,\mathbf z_v)$ is a target matrix for $(G,\ell)$, therefore $\ell\in\mathcal{C}_K^n(G)$.
\end{proof}

This proves the ``if'' part of Theorem \ref{CompK}, since a chordal graph can be built up by clique-sums.
Therefore, for a choral graph, in order to tell if a length function is completable or not, we only need to check all the cliques.

\section{Distance geometry for spheres}\label{sec:Sphere}
We would like to mention some previous works that actually established a distance geometry for spheres.

We denote by $\mathbb{S}^n$ the set of $(n-1)$-spheres in $\hat{\mathbb{E}}^n$.
For a sphere $p$ in $\mathbb{S}^n$, we denote its center by $\c(p)$, and its radius by $r(p)$.
Therefore $(r(p),\c(p))$ is a point in the $(n+1)$-dimensional half-space $x_0>0$.

The following equation defines an analogue of the distance in Section \ref{sec:Dist}:
$$
d_S^2(p,q)=\frac{d_E^2(\c(p),\c(q))-r^2(p)-r^2(q)}{2r(p)r(q)}
$$
It seems to be first used by Darboux \cite{darboux1872}, 
and was referred to as ``separation'' in \cite{boyd1973}.
By abuse of language, we call $d_S$ a ``distance'' for spheres, even though it is not positive.
Related terms, such as ``distance space'', ``distance matrix'' and ``isometric embedding'', are also abused.

It is easy to verify that 
$d_S>1$ if the two spheres are externally disjoint, 
$=1$ if they are tangent from outside, 
$=-1$ if they are tangent from inside, 
$<-1$ if one is inside the other,
and $-1<d_S=-\cos\alpha<1$ if the two spheres intersect, where $\alpha$ is the angle of intersection,
so $d_S=0$ if they intersect orthogonally.

We have the following results:
\begin{thm}\label{EmbS}
  Let $(I,d)$ be a ``distance'' space \textup{(}symmetric but not necessarily non-negative\textup{)}.
  The following statements are equivalent
  \begin{enumerate}
      \renewcommand{\theenumi}{\rm (\roman{enumi})}
      \renewcommand{\labelenumi}{\theenumi}
    \item\label{sphere1} $(I,d)$ is ``isometrically'' embeddable into $(\mathbb{S}^n,d_S)$.
    \item\label{sphere2} $(-1)^{|J|}\det D_J\leq 0$ for all $J\subseteq I$, and the rank of $D_I$ is at most $n+2$.
    \item\label{sphere3} $D_I$ has exactly one positive eigenvalue, and at most $n+1$ negative eigenvalues.
  \end{enumerate}
\end{thm}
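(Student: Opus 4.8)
\emph{The plan is to} mirror the proof of Theorem~\ref{EmbK}, replacing the lightcone embedding of Section~\ref{sec:LCone} by the classical Darboux representation of spheres as \emph{space-like} vectors in one dimension higher. First I would extend the isometry $\Psi$ of Theorem~\ref{lightcone} to all of $\mathbb{S}^n$ by assigning to a sphere $p$ with center $\c(p)$ and radius $r(p)$ the vector
\[
\Psi(p)=\frac{1}{r(p)}\left(\frac{1-\|\c(p)\|_2^2+r(p)^2}{2},\ \c(p),\ \frac{1+\|\c(p)\|_2^2-r(p)^2}{2}\right)\in\mathbb{R}^{n+1,1},
\]
extended to hyperplanes by the same limiting recipe as after \eqref{Embed0}. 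A direct computation, the analogue of the verification following \eqref{Embed0}, gives $\langle\Psi(p),\Psi(p)\rangle_{n+1,1}=1$ and $\langle\Psi(p),\Psi(q)\rangle_{n+1,1}=-d_S^2(p,q)$. Thus $\Psi$ realizes $\mathbb{S}^n$ as the set of unit space-like vectors of $\mathbb{R}^{n+1,1}$ and identifies the distance matrix with $D_I=-G_I$, where $G_I$ is the Gram matrix $\bigl(\langle\Psi(\sigma_i),\Psi(\sigma_j)\rangle_{n+1,1}\bigr)$. This is exactly the structural input that let Section~\ref{sec:LCone} read off the spectrum, now with null vectors replaced by unit space-like ones and $\mathbb{R}^{n,1}$ by $\mathbb{R}^{n+1,1}$.

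Granting this representation, statement \ref{sphere1} becomes the question of which symmetric $G_I$ with unit diagonal arise as Gram matrices of vectors in $\mathbb{R}^{n+1,1}$. By Sylvester's law of inertia applied to a factorization $G_I=V^{t}JV$ with $J=\mathrm{diag}(1,\dots,1,-1)$, such a realization exists precisely when $\mathrm{In}^{+}(G_I)\le n+1$ and $\mathrm{In}^{-}(G_I)\le 1$; equivalently, since $D_I=-G_I$, when $D_I$ has at most $n+1$ negative eigenvalues, at most one positive eigenvalue, and $\mathrm{rk}\,D_I=\mathrm{rk}\,G_I\le n+2$. This already yields the implications toward \ref{sphere3}. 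For the determinant reformulation \ref{sphere2} I would run the principal-minor bookkeeping of Theorem~\ref{EmbK}: because $D_J=-G_J$ for every $J\subseteq I$, one has the identity $(-1)^{|J|}\det D_J=\det G_J$, so the sign conditions on the minors of $D_I$ record the inertia of $G_I$, exactly as the Schur-complement relations \eqref{schur} did in the kissing-sphere case.

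The step that is \emph{not} automatic, and which I expect to be the main obstacle, is upgrading ``at most one positive eigenvalue'' to the ``exactly one'' asserted in \ref{sphere3}, equivalently showing that $G_I$ genuinely carries a negative eigenvalue. In the proof of Theorem~\ref{EmbK} this was free: two distinct future-directed null vectors satisfy the reverse Cauchy--Schwarz inequality $\langle\x_i,\x_j\rangle_{n,1}<0$, so each $2\times2$ block contributes the inertia $(1,1,0)$ and a time-like direction necessarily appears in the span. For space-like sphere vectors no such sign control is available, since $\langle\Psi(p),\Psi(q)\rangle_{n+1,1}=-d_S^2(p,q)$ may be positive, negative, or zero; the time-like direction must therefore be produced by a different mechanism, and reconciling the required minor signs with the diagonal value $d_S^2(p,p)=-1$ lives at the same delicate point.

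To carry the ``exactly one'' clause I would exploit the genuine-sphere condition $\langle\Psi(p),\mathbf e_\infty\rangle_{n+1,1}=-1/r(p)\neq 0$, where $\mathbf e_\infty$ is the null vector representing the point at infinity. After normalizing one element by a Lorentz transformation, the analogue of sending $\sigma_k$ to a distinguished hyperplane in the proof of Theorem~\ref{EmbK}, I would peel off the corresponding row and column by a Schur complement and show that the peeled block carries inertia $(1,1,0)$, forcing the single positive eigenvalue of $D_I$ while the Schur complement accounts for the negative part. Establishing that this normalization is always achievable, and that the peeled block has the claimed signature, is where the real work concentrates; it is precisely the point at which the geometry of spheres departs most sharply from that of kissing spheres.
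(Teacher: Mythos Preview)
The paper does not actually prove Theorem~\ref{EmbS}. After stating it, the paper attributes \ref{sphere1}$\Leftrightarrow$\ref{sphere2} to Darboux and \ref{sphere1}$\Leftrightarrow$\ref{sphere3} to Boyd, and then merely offers the Darboux representation
\[
p\longmapsto \frac{1}{2r(p)}\bigl(1-\|\c(p)\|_2^2+r(p)^2,\ 2\c(p),\ 1+\|\c(p)\|_2^2-r(p)^2\bigr)\in\mathbb H^{n+1}_+\subset\mathbb R^{n+1,1}
\]
as an ``isometry'' that makes the statement ``intuitive''. This is exactly the map $\Psi$ you write down, so your approach coincides with the paper's sketch; the Gram-matrix/Sylvester argument you outline is precisely the content the paper leaves implicit when it says the result is intuitive in this model.

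Your worry about upgrading ``at most one'' to ``exactly one'' positive eigenvalue is well placed and is \emph{not} handled by the paper either. Note that with the $d_S^2$ formula the diagonal of $D_I$ equals $-1$, so already for $|I|=1$ one has $(-1)^{1}\det D_{\{i\}}=1>0$ and no positive eigenvalue; and for two orthogonally intersecting spheres $G_I=\bigl(\begin{smallmatrix}1&0\\0&1\end{smallmatrix}\bigr)$ is positive definite, so $D_I=-G_I$ again has no positive eigenvalue. Thus the ``exactly one'' clause cannot hold without an additional hypothesis. The paper's only gesture toward this is the restriction to the \emph{future-directed} sheet $\mathbb H^{n+1}_+$ (the analogue of the future lightcone in Section~\ref{sec:LCone}), but, unlike for null vectors, two space-like unit vectors with $t>0$ need not span a subspace containing a time-like direction, so this alone does not rescue the clause. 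Your proposed Schur-complement peeling cannot succeed in full generality for the same reason: the obstruction is in the statement, not in the method. In short, your plan matches the paper's, and the difficulty you flag is a genuine gap in the theorem as stated rather than a gap in your argument; the paper simply defers to the cited references on this point.
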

The equivalence between \ref{sphere1} and \ref{sphere2} seems to be first proved in \cite{darboux1872} for dimension $2$ and $3$.
A proof of the equivalence between \ref{sphere1} and \ref{sphere3} was sketched in \cite{boyd1973}.

We now show that Theorem \ref{EmbS} is intuitive in the framework of M\"obius geometry. 
There is a conventional way \cites{hertrich-jeromin2003, cecil2008, maxwell1981} to represent a spheres in $\mathbb{S}^n$ 
by vectors on the future directed one-sheet hyperboloid $$\mathbb{H}^{n+1}_+=\{\x\mid\langle\x,\x\rangle_{n+1,1}=1,t>0\}$$
Explicitely, a sphere $(r,\c)$ is represented by the vector
$$
\frac{1}{2r}\left(1-\|\c\|_2^2+r^2,2\c,1+\|\c\|_2^2-r^2\right)
$$
One can verify that this is an ``isometry'' between $(\mathbb{S}^n,d_S)$ and the Minkowski space $(\mathbb{H}^{n+1}_+,d_M)$.

This ``isometric'' embedding was used in \cite{maxwell1981} for studying sphere packings.
It is invariant under Lorentz transformations, 
reflecting the fact that combinatorics of spheres are M\"obius invariant.

With this representation, we can derive the result in Section \ref{sec:LCone} in another way:
Let $p$ be a sphere represented by a vector $\x_p$ sucht that $\langle\x_p,\x_p\rangle_{n+1,1}=1$.
a sphere tangent to $p$ is represented by vector such that $\langle\x,\x\rangle_{n+1,1}=1$
and $\langle\x,\x_p\rangle_{n+1,1}=-1$.
We then have $\langle\x+\x_p,\x+\x_p\rangle_{n+1,1}=0$ which defines an $n$-dimensional cone.
In fact, $\frac{\sqrt{2}}{2}(\x+\x_p)$ is an embedding of spheres kissing $p$ onto a lightcone as described in Section \ref{sec:LCone}.

\section*{Acknowledgement}
I would like to thank G\"unter M. Ziegler, my supervisor of PhD, for suggestions and careful review, 
thank Raman Sanyal for references, and thank Louis Theran and Karim Adiprasito for very helpful discussions. 

\bibliography{References}

\end{document}